\global\boolfalse{cbx:parens}}
\newcommand{\titlebox}[2]{
\begin{tikzpicture}
\node[draw,thick,inner sep=6mm] (titlebox) {#2};
\node[fill=white] (Title) at (titlebox.north) {\bfseries  #1};
\end{tikzpicture}
}
\definecolor{mydarkblue}{rgb}{0,0.08,0.45}
\def\xx{{\boldsymbol x}}
\def\dd{{\boldsymbol d}}
\def\bb{{\boldsymbol b}}
\def\II{{\boldsymbol I}}
\def\yy{{\boldsymbol y}}
\def\zz{{\boldsymbol z}}
\def\AA{{\boldsymbol A}}
\def\DD{{\boldsymbol D}}
\def\ttheta{{\boldsymbol \theta}}
\def\HH{{\boldsymbol H}}
\def\dif{\mathop{}\!\mathrm{d}}
\def\RR{{\mathbb R}}
\DeclareMathOperator{\sign}{sign}
\def\defas{\stackrel{\text{def}}{=}}
\DeclareMathOperator{\spn}{\mathbf{span}}
\newcommand*\mybluebox[1]{\colorbox{myblue}{\hspace{1em}#1\hspace{1em}}}
\DeclareMathOperator*{\argmin}{{arg\,min}}
\definecolor{myblue}{HTML}{D2E4FC}
\definecolor{Gray}{gray}{0.92}
\newtheorem{theorem}{Theorem}
\newtheorem{assumption}{Assumption}
\newtheorem{proposition}{Proposition}
\newtheorem{example}{Example}
\newtheorem{corollary}{Corollary}
\newtheorem{definition}{Definition}
\definecolor{burnin}{HTML}{bfe4bf}
\definecolor{linearphase}{HTML}{fedfc2}
\title{The Curse of Unrolling: \\ Rate of Differentiating Through Optimization}
\author{%
  Damien Scieur \\
  Samsung SAIL Montreal\\
  \texttt{damien.scieur@gmail.com} \\
  \And
  Quentin Bertrand  \\
  Mila \& Universtié de Montréal \\
  \texttt{quentin.bertrand@mila.quebec} \\
  \AND
  Gauthier Gidel \\
  Mila \& Universit{\'e} de Montr{\'e}al\\
    Canada CIFAR AI Chair \\
  \texttt{gidelgau@mila.quebec} \\
  \And
  Fabian Pedregosa \\
  Google Research \\
  \texttt{pedregosa@google.com}
}
 \def\@textbottom{\vskip \z@ \@plus 25pt}
 \let\@texttop\relax
\begin{document}

\maketitle

\begin{abstract}
Computing the Jacobian of the solution of an optimization problem is a central problem in machine learning, with applications in hyperparameter optimization, meta-learning, optimization as a layer, and dataset distillation, to name a few. Unrolled differentiation is a popular heuristic that approximates the solution using an iterative solver and differentiates it through the computational path. This work provides a non-asymptotic convergence-rate analysis of this approach on quadratic objectives for gradient descent and the Chebyshev method. We show that to ensure convergence of the Jacobian, we can either 1) choose a large learning rate leading to a fast asymptotic convergence but accept that the algorithm may have an arbitrarily long burn-in phase or 2) choose a smaller learning rate leading to an immediate but slower convergence. We refer to this phenomenon as the \textit{curse of unrolling}.
Finally, we discuss open problems relative to this approach, such as deriving a practical update rule for the optimal unrolling strategy and making novel connections with the field of Sobolev orthogonal polynomials.
\end{abstract}
\vspace{-2ex}
\begin{figure}[ht]
        \centering
       \includegraphics[width=\linewidth]{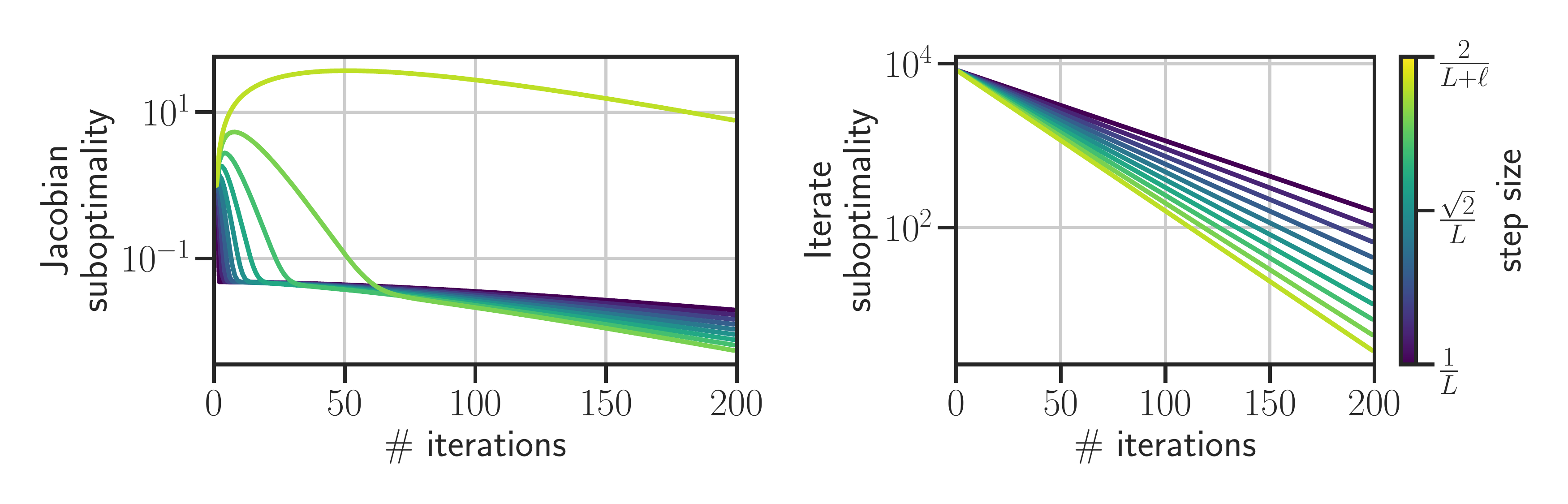}

        \caption{
        \textbf{The Curse of Unrolling:} Better function suboptimality does not imply better Jacobian suboptimality.
        The speed of the function suboptimality (right)  $f(\xx_t(\ttheta),\ttheta)-f(\xx_{\star}(\ttheta), \ttheta)$ of gradient descent is not representative of the speed of convergence of the Jacobian $\|\partial_\ttheta \xx_t(\ttheta)-\partial_\ttheta \xx_\star(\ttheta)\|_F$ (left) at early iterations. The Jacobian suboptimality exhibits a \emph{burn-in} phase where the suboptimality initially increases. The length of this burn-in phase depends on the step-size, with larger step-sizes having smaller function suboptimality but larger burn-in phase.
    }
    \label{fig:intro_jac}
\end{figure}

\vspace{-2ex}

\section{Introduction}


Let $\xx_\star(\ttheta)$ be a function defined implicitly as the solution to an optimization problem,
\[
    \xx_\star(\ttheta) = \argmin_{\xx \in \RR^d} f(\xx, \ttheta).
\]
Implicitly defined functions of this form appear in different areas of machine learning, such as
reinforcement learning~\citep{pfau2016connecting,du2017stochastic}, generative adversarial networks~\citep{metz2016unrolled}, hyper-parameter optimization~\citep{bengio_2000,pedregosa2016hyperparameter,franceschi_2017,lorraine_2020,bertrand2020implicit},
meta-learning \citep{franceschi2018bilevel,metalearning-implicit}, deep equilibrium models, \citep{bai2019deep} or optimization as a layer \citep{kim_2017,amos_2017, wang2019satnet}, to name a few.
The main computational burden of using implicit functions in a machine learning pipeline is that the Jacobian computation $\partial_\ttheta \xx_\star(\ttheta)$ is challenging:
since the implicit function $\xx_\star(\ttheta)$ does not usually admit an explicit formula, classical automatic differentiation techniques cannot be applied directly.

Two main approaches have emerged to compute the Jacobian of implicit functions: \emph{implicit} differentiation and \emph{unrolled} differentiation. This paper focuses on unrolled differentiation while recent surveys on implicit differentiation are  \citep{tutorial_implicit,blondel2021efficient}.

%

Unrolled differentiation, also known as iterative differentiation, starts by approximating the implicit function $\xx_\star(\cdot)$ by the output of an iterative algorithm, which we denote $\xx_t(\cdot)$, and then differentiates through the algorithm's computational path
\citep{wengert_1964,domke_2012,deledalle_2014, franceschi2017forward,shaban2019truncated}.

\textbf{Contributions.} We analyze the convergence of the unrolled Jacobian by establishing worst-case bounds on the Jacobian suboptimality $\|\partial \xx_t(\ttheta)-\partial\xx_{\star}(\ttheta)\|$ for different methods, more precisely:
\begin{enumerate}[leftmargin=*]
\item We provide a general framework for analyzing unrolled differentiation on quadratic objectives for any gradient-based method (Theorem \ref{thm:master_identity}). For gradient descent and the Chebyshev iterative method, we derive closed-form worst-case convergence rates in Corollary \ref{cor:worst_case_rates_gd} and Theorem \ref{thm:bound_cheby}.
\item  We identify the ``curse of unrolling'' as a consequence of this analysis: A fast asymptotic rate inevitably leads to a condition number-long burn-in phase where the Jacobian suboptimality increases. While it is possible to reduce the length, or the peak, of this burn-in phase, this comes at the cost of a slower asymptotic rate (Figure \ref{fig:empirical_comparison}).
\item  Finally, we describe a novel approach to mitigate the curse of unrolling, motivated by the theory of Sobolev orthogonal polynomials (Theorem \ref{thm:soboloev_method}).
\end{enumerate}

\textbf{Related work} The analysis of unrolling was pioneered in the work of \citet{gilbert1992automatic}, who showed the asymptotic convergence of this procedure for a class of optimization methods that includes gradient descent and Newton's method. These results have been recently extended by \citet{ablin_2020} and \citet{grazzi2020iteration}, where they develop a complexity analysis for non-quadratic functions. The rate they obtain are valid only for monotone optimization algorithms, such as gradient descent with small step size. 
We note that \citet{grazzi2020iteration} developed a non-asymptotic rate for gradient descent that matches our Theorem \ref{thm:rate_gd}, and provided plots where one can appreciate the burn-in phase, although they did not discuss this behaviour nor the trade-off between the length of this phase and the step-size.
Compared to these two papers, we instead focus on the more restrictive quadratic optimization setting.
Thanks to this, we obtain tight rates for a larger class of functions, including non-monotone algorithms such as gradient descent with a large learning rate and the Chebyshev method. Furthermore, we also derive novel \emph{accelerated} variants for unrolling (\S \ref{scs:sobolevalgo}).

\section{Preliminaries and Notations}
In this paper, we consider an objective function $f$ parametrized by two variables $\xx \in \RR^d$ and $\ttheta \in \RR^k$. We are interested in the derivative of optimization problem solutions:
\begin{empheq}[box=\mybluebox]{equation*}\tag{OPT}\label{eq:obj_fun}
\begin{aligned}
\vphantom{\sum_i^k} &\text{{\bfseries Goal}: approximate Jacobian } \partial \xx_\star(\ttheta)\,, \text{ where } \xx_\star(\ttheta) = \argmin_{\xx \in \RR^d} f(\xx, \ttheta) \,.
\end{aligned}
\end{empheq}
We also assume $\xx_\star(\ttheta)\in \RR^d$ is the unique minimizer of $f(\xx, \ttheta)$, for some fixed value of $\ttheta$. In particular, we will describe the rate of convergence of $\|\partial \xx_t(\ttheta)-\partial \xx_\star(\ttheta)\|$ in the specific case where $f$ is a quadratic function in its first argument, and $\xx_t$ is generated by a first-order method.

\textbf{Notations.} In this paper, we use upper-case letter for polynomials ($P,\, Q$), bold lower-case for vectors ($\xx,\, \bb$), and bold upper-case for matrices ($\HH$). We write $\mathcal{P}_t$ the set of polynomials of degree at least $t$. We distinguish $\nabla f(\xx, \ttheta)$, that refers to the gradient of the function $f$ in its first argument, and $\partial_\ttheta f(\xx, \ttheta)$ is the partial derivative of $f$ w.r.t. its second argument, evaluated at $(\xx, \ttheta)$ (if there is no ambiguity, we write $\partial$ instead of $\partial_\ttheta$). Similarly, $\partial \xx(\ttheta)$ is the Jacobian of the vector-valued function $\xx(\cdot)$ evaluated at $\ttheta$ and $P'(\cdot)$ the derivative of the polynomial $P$.
The Jacobian $\partial \HH(\ttheta)$ a tensor of size $k \times p \times p$. We'll denote its tensor multiplication by a matrix $\boldsymbol{Q} \in \RR^{p \times q}$ by $\partial \HH(\ttheta)\boldsymbol{Q}$, with the understanding that this denotes the multiplication along the first axis, that is, the resulting tensor is characterized by $[\partial \HH(\ttheta)\boldsymbol{Q}]_i = \partial \HH(\ttheta)_i \boldsymbol{Q}$ for $1 \leq i \leq k$.
Finally, we denote by
$\ell$ the strong convexity constant of the objective function $f$, by $L$ its smoothness constant, and by $\kappa=\ell/L$ its inverse condition number.

\subsection{Problem Setting and Main Assumptions}

Throughout the paper, we make the following three assumptions. The first one assumes the problem is quadratic. The second one is more technical and assumes the Hessian commutes with its derivative respectively, which simplifies considerably the formulas. As we'll discuss in the Experiments section, we believe that some of these assumptions could potentially be relaxed. The third assumption restricts the class of algorithms to first-order methods.

\begin{assumption}[Quadratic objective] \label{assump:quadratic}
The function $f$ is a quadratic function in its first argument,
\begin{equation}
    f(\xx, \ttheta) \defas \tfrac{1}{2} \xx^\top \HH(\ttheta)\, \xx + \bb(\ttheta)^\top \xx\,,
\end{equation}
where $\ell \II \preceq \HH(\ttheta) \preceq L\II$ for $0< \ell < L$\,.
We write $\xx_\star(\ttheta)$ the minimizer of $f$ w.r.t. the first argument.
\end{assumption}

\vspace{0.5em}\begin{assumption}[Commutativity of Jacobian] \label{assump:commutative}
We assume that $\HH(\ttheta)$ commutes with its Jacobian, in the sense that
\begin{equation}
    \partial \HH(\ttheta)_i \HH(\ttheta) = \HH(\ttheta)\partial \HH(\ttheta)_i~\text{ for } 1 \leq i \leq k\,.
\end{equation}
In the case in which $\ttheta$  is a scalar ($k=1$), this condition amounts to the commutativity between matrices $\partial \HH(\ttheta) \HH(\ttheta) = \HH(\ttheta) \partial \HH(\ttheta) $

\end{assumption}
\paragraph{Importance.} The previous assumption allows to have simpler expression for the Jacobian of $\HH^t$. Notably, with this assumption the Jacobian of $\HH$ can be expressed as
\begin{equation}
    \partial \left[\HH(\ttheta)^t\right] = t \partial \HH(\ttheta) \HH^{t-1}(\ttheta)\,.\label{eq:derivative_commute}
\end{equation}

This assumption is verified for example for Ridge regression (see below).
Although quite restrictive, empirical evidence (see Appendix~\ref{scs:commutatity}) suggest that this assumption could potentially be relaxed or even lifted entirely.

\begin{example}[Ridge regression] \label{example:ridge} Let us fix $\AA \in \RR^{n \times d}, \bar{\xx} \in \RR^d, \, \bb \in \RR^n$, and let $\HH(\theta) = \AA^\top\AA + \theta\II$, where $\theta$ in this case is a scalar. The ridge regression problem
    \[
        f(\xx, \theta) = \tfrac{1}{2} \left(\|\AA \xx-\yy\|_2^2 +\theta \|\xx-\bar{\xx}\|_2^2\right) ,
    \]
    satisfies Assumptions \ref{assump:quadratic}, \ref{assump:commutative}, as $f(\xx, \theta)$ is quadratic in $\xx$, and $\partial \HH(\theta) \HH(\theta) = \HH(\theta)\partial \HH(\theta) = \HH(\theta)$.
\end{example}

In our last assumption we restrict ourselves to \textit{first-order method}, widely used in large-scale optimization. This includes methods like gradient descent or Polyak's heavy-ball.
\begin{assumption}[First-order method] \label{assump:first-order}
    The iterates $\{\xx_t\}_{t=0\ldots}$ are generated from a first-order method:
    \[
        \xx_t(\ttheta) \in \xx_0(\ttheta) + \spn\{ \nabla f(\xx_0(\ttheta), \ttheta) ,\; \ldots, \; \nabla f(\xx_{t-1}(\ttheta), \ttheta) \}\,.
    \]
\end{assumption}

\subsection{Polynomials and First-Order Methods on Quadratics} \label{sec:link_poly}

The polynomial formalism has seen a revival in recent years, thanks to its simple and constructive analysis~\citep{scieur2020regularized, pedregosa2020averagecase, pmlr-v139-agarwal21a}.
It starts from a connection between optimization methods and polynomials that allows to cast the complexity analysis of optimization methods as polynomial bounding problem.

\subsubsection{Connection with Residual Polynomials}

When minimizing quadratics, after $t$ iterations, one can associate to any optimization method polynomial $P_t$ of degree at most $t$ such that $P_t(0) = 1$, i.e.,
\[
    P_t(\lambda) = a_t \lambda^t + \cdots + 1.
\]
 In such a case, the error at iteration $t$ then can be expressed as
\begin{equation}\label{eq:residual_polynomial}
    \xx_t(\ttheta) - \xx_\star(\ttheta) = {\color{purple}P_t(\HH(\ttheta))}(\xx_0(\ttheta) - \xx_\star(\ttheta))\,.
\end{equation}
This polynomial ${\color{purple}P_t(\HH(\ttheta))}$ is called the {\color{purple}\textit{residual polynomial}} and represents the output of evaluating the originally real-valued polynomial $P_t(\cdot)$ at the matrix $\HH$.

\begin{example} \label{example:gd}
In the case of gradient descent, the update reads $\xx_{t+1} - \xx^\star = (\boldsymbol{I} - \gamma \HH(\ttheta))(\xx_t - \xx^\star)$, which yields the residual polynomial $P_t(\HH(\ttheta)) = (\boldsymbol{I} - \gamma \HH(\ttheta))^t$\,.
\end{example}


\subsubsection{Worst-Case Convergence Bound}\label{scs:worstcase}

From the above identity, one can quickly compute a worst-case bound on the associated optimization method.
Using the Cauchy-Schwartz inequality on \eqref{eq:residual_polynomial}, we obtain
\[
    \|\xx_t(\ttheta) - \xx_\star(\ttheta)\| \leq \|{\color{purple}P_t(\HH(\ttheta))}\|\,\|\xx_0(\ttheta) - \xx_\star(\ttheta)\|\,.
\]
We are interested in the performance of the first-order method on a class of quadratic functions (see Assumption \ref{assump:quadratic}), whose Hessian has bounded eigenvalues. Using the fact that the $\ell_2$-norm of a matrix is equal to its largest singular value, the worst-case performance of the algorithm then reads
\begin{equation}\label{eq:worst_case_optim}
    \|\xx_t(\ttheta) - \xx_\star(\ttheta)\| \leq \max_{\lambda\in[\ell,L]}|{\color{purple}P_t(\lambda)}|\,\|\xx_0(\ttheta) - \xx_\star(\ttheta)\|\,.
\end{equation}
Therefore, the worst-case convergence bound is a function of the polynomial associated with the first-order method, and depends on the bound over the eigenvalue of the Hessian ($\lambda\in[\ell,L]$).

\subsubsection{Expected Spectral Density and Average-Case Complexity}

We recall the average-case complexity framework \citep{pedregosa2020averagecase, paquette2022halting, cunha2021only}, which provides a finer-grained convergence analysis than the worst-case. This framework is crucial in developing an accelerated method for unrolled differentiation (Section \ref{sec:accelerated_unroling}).

Instead of considering the worst instance from a class of quadratic functions, average-case analysis considers that functions are drawn \textit{at random} from the class. This means that, in Assumption \ref{assump:quadratic}, the matrix $\HH(\ttheta)$, the vector $\bb(\ttheta)$ and the initialization $\xx_0(\theta)$ in Assumption \ref{assump:first-order} are sampled from some (potentially unknown) probability distributions. Surprisingly, we do not require the knowledge of these distributions, instead, the quantity of interest is the \textit{expected spectral density} $\mu(\lambda)$, defined as
\begin{equation}\label{eq:def_expected_spectral_density}
    \textstyle \mu(\lambda) \defas  \mathbb{E}_{\HH(\ttheta)}[\mu_{\HH(\ttheta)}(\lambda)], \qquad \mu_{\HH(\ttheta)}(\lambda) \defas  \frac{1}{d}\sum_{i=1}^d \delta(\lambda-\lambda_i(\HH(\ttheta))).
\end{equation}
In Equation~\ref{eq:def_expected_spectral_density}, $\lambda_i(\HH(\theta))$ is the $i$-th eigenvalue of $\HH(\ttheta)$, $\delta(\cdot)$ is the Dirac's delta and $\mu_{\HH(\ttheta)}(\lambda)$ is the \textit{empirical spectral density} (i.e., $\mu_{\HH(\ttheta)}(\lambda) = 1 / d$ if $\lambda$ is an eigenvalue of $\HH(\ttheta)$ and $0$ otherwise).

Assuming $\xx_0(\ttheta)-\xx_\star(\ttheta)$ is independent of $\HH(\ttheta)$,\footnote{This assumption can be removed as in \citep{cunha2021only} at the price of a more complicated expected spectral density.} the average-case complexity of the first-order method associated to the polynomial $P_t$ as
\begin{equation}\label{eq:average-case_analysis}
    \mathbb{E}[\|\xx_t(\ttheta)-\xx_\star(\ttheta)\|^2] = \mathbb{E}[\|\xx_0(\ttheta)-\xx_\star(\ttheta)\|^2]\int P_t^2(\lambda) \;\dif\mu(\lambda)\,.
\end{equation}
Here, the term in $P_t$ is algorithm-related, while the term in $\dif \mu$ is related to the difficulty of the (distribution over the) problem class. As opposed to the worst-case analysis, where only the \textit{worst} value of the polynomial impacts the convergence rate, the average-case rate depends on the expected value of the squared polynomial $P_t$ over the \textit{whole} distribution. Note that in the previous equation, the expectation is taken over problem instances and not over any stochasticity of the algorithm.

\section{The Convergence Rate of differentiating through optimization}
\label{sec:cvg_rate}

We now analyze the rate of convergence of gradient descent and Chebyshev algorithm (optimal on quadratics). We first introduce the \textit{master identity} (Theorem~\ref{thm:master_identity}), which draws a link between how well a first-order methods estimates $\partial \xx_\star(\ttheta)$, its associated residual polynomial ${\color{purple}P_t}$, and its derivative ${\color{teal}P_t'}$.
\begin{restatable}[Master identity]{theorem}{masteridentity}\label{thm:master_identity}
Under Assumptions \ref{assump:quadratic}, \ref{assump:commutative}, \ref{assump:first-order}, let $\xx_t(\ttheta)$ be the $t^{\text{th}}$ iterate of a first-order method associated to the residual polynomial $P_t$. Then the Jabobian error can be written as
\begin{align}
        \partial \xx_t(\ttheta) - \partial \xx_\star(\ttheta) & = \big({\color{purple}P_t(\HH(\ttheta))} - {\color{teal}P_t'(\HH(\ttheta))}\HH(\ttheta)\big) (\partial \xx_0(\ttheta)-\partial \xx_\star(\ttheta)) \nonumber\\
        &\quad+ {\color{teal}P_t'(\HH(\ttheta))} \partial_\ttheta\nabla f(\xx_0(\ttheta), \ttheta)\,.
    \label{eq:distance_to_optimum}
\end{align}
\end{restatable}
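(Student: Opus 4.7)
The plan is to start from the residual polynomial identity recalled in Equation~\eqref{eq:residual_polynomial}, namely $\xx_t(\ttheta) - \xx_\star(\ttheta) = P_t(\HH(\ttheta))(\xx_0(\ttheta) - \xx_\star(\ttheta))$, and differentiate both sides with respect to $\ttheta$. Since the polynomial $P_t$ associated to a first-order method with fixed hyperparameters has coefficients that do not depend on $\ttheta$, the product rule yields
\begin{equation*}
\partial \xx_t(\ttheta) - \partial \xx_\star(\ttheta) = \partial\!\left[P_t(\HH(\ttheta))\right](\xx_0(\ttheta) - \xx_\star(\ttheta)) + P_t(\HH(\ttheta))(\partial \xx_0(\ttheta) - \partial \xx_\star(\ttheta)).
\end{equation*}

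Next I would extend Equation~\eqref{eq:derivative_commute} from monomials to arbitrary polynomials. For each monomial $\HH^k$, Assumption~\ref{assump:commutative} gives $\partial[\HH^k] = k\, \partial\HH\, \HH^{k-1}$, which is exactly the derivative of $\lambda^k$ evaluated at $\HH$ and multiplied by $\partial \HH$; commutativity additionally means the ordering of the factors does not matter. Summing against the (\,$\ttheta$-independent\,) coefficients of $P_t$ then gives $\partial[P_t(\HH(\ttheta))] = P_t'(\HH(\ttheta))\,\partial\HH(\ttheta)$. Substituting this into the display above expresses the Jacobian error in terms of $\partial\HH(\ttheta)(\xx_0 - \xx_\star)$.

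The last step is to trade this quantity for the gradient-shift term appearing in the statement. Using $\nabla f(\xx, \ttheta) = \HH(\ttheta)\xx + \bb(\ttheta)$ together with the optimality condition $\HH(\ttheta)\xx_\star(\ttheta) + \bb(\ttheta) = 0$, differentiation gives $\partial\bb(\ttheta) = -\partial\HH(\ttheta)\,\xx_\star(\ttheta) - \HH(\ttheta)\,\partial \xx_\star(\ttheta)$. Plugging this into $\partial_\ttheta \nabla f(\xx_0(\ttheta), \ttheta) = \partial\HH(\ttheta)\,\xx_0(\ttheta) + \HH(\ttheta)\,\partial\xx_0(\ttheta) + \partial\bb(\ttheta)$ and simplifying yields the identity
\begin{equation*}
\partial\HH(\ttheta)(\xx_0(\ttheta) - \xx_\star(\ttheta)) = \partial_\ttheta \nabla f(\xx_0(\ttheta), \ttheta) - \HH(\ttheta)(\partial\xx_0(\ttheta) - \partial\xx_\star(\ttheta)).
\end{equation*}
Substituting this back and collecting the $(\partial \xx_0 - \partial \xx_\star)$ terms produces exactly the master identity \eqref{eq:distance_to_optimum}.

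The main obstacle I anticipate is bookkeeping rather than conceptual: the object $\partial\HH(\ttheta)$ is a third-order tensor, and I need to verify that the commutativity assumption is strong enough to justify the ``chain rule'' $\partial[P_t(\HH)] = P_t'(\HH)\,\partial\HH$ slice-wise, with $P_t'(\HH(\ttheta))$ and $\partial\HH(\ttheta)$ freely reordered whenever they multiply. Once that identity is in hand, the rest is a direct algebraic rearrangement exploiting the optimality condition for $\xx_\star(\ttheta)$.
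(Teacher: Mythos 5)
Your proposal is correct and follows essentially the same route as the paper's own proof: differentiate the residual identity \eqref{eq:residual_polynomial}, use Assumption~\ref{assump:commutative} to get $\partial[P_t(\HH(\ttheta))]=P_t'(\HH(\ttheta))\,\partial\HH(\ttheta)$, and then eliminate $\partial\HH(\ttheta)(\xx_0-\xx_\star)$ via the differentiated optimality condition so that the cross-derivative $\partial_\ttheta\nabla f(\xx_0(\ttheta),\ttheta)$ appears. If anything, your version is slightly tidier: you spell out the monomial-to-polynomial extension of \eqref{eq:derivative_commute} and keep the signs in the optimality condition $\HH(\ttheta)\xx_\star(\ttheta)+\bb(\ttheta)=0$ consistent with Assumption~\ref{assump:quadratic}, whereas the paper's write-up uses $\bb(\ttheta)=\HH(\ttheta)\xx_\star(\ttheta)$.
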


The above identity involves the {\color{teal}{\bfseries derivative}} of the residual polynomials and not only the {\color{purple}{\bfseries residual}} polynomial, as was the case for minimization \eqref{eq:residual_polynomial}. This difference is crucial and will result in different rates for the Jacobian suboptimality than classical ones for objective or iterate suboptimality.

For conciseness, our bounds make use of the following shorthand notation
\[
    G \defas  \| \partial_\ttheta\nabla f(\xx_0(\ttheta), \ttheta)\|_F.
\]

\subsection{Worst-case Rates for Gradient Descent}
\label{sub:worst_case_gd}

We consider the fixed-step gradient descent algorithm,
\begin{align}
    \xx_t(\ttheta) = \xx_{t-1}(\ttheta)-\nabla f(\xx_{t-1}(\ttheta), \ttheta).
\end{align}
As mentioned in Example \ref{example:gd}, the associated polynomial reads $P_t = (1-h\lambda)^t$. We can deduce its convergence rate after injecting the polynomial in the master identity from Theorem \ref{thm:master_identity}.

\begin{restatable}[Jacobian Suboptimality Rate for Gradient Descent]{theorem}{rategd} \label{thm:rate_gd}
    Under Assumptions \ref{assump:quadratic}, \ref{assump:commutative}, let $\xx_t(\ttheta)$ be the $t^{\text{th}}$ iterate of gradient descent scheme with step size $h>0$. Then,
    \begin{equation*}
        \|\partial \xx_t(\ttheta) - \partial \xx_\star(\ttheta)\|_F \leq \max_{\lambda \in[\ell, L]} \Big| \text{\colorbox{linearphase}{$\underbrace{\left( 1-h\lambda \right)^{t-1}}_{\text{exponential decrease}}$\hspace{-1ex}}}\Big\{
        \text{\colorbox{burnin}{$\underbrace{(1+(t-1)h\lambda)}_{\text{linear increase}}$}}
        \|\partial \xx_0(\ttheta)-\partial \xx_\star(\ttheta)\|_F + \text{\colorbox{burnin}{$ht$}} G \Big\}\Big|.
    \end{equation*}
\end{restatable}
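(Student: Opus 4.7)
My plan is to apply the master identity of Theorem~\ref{thm:master_identity} to the gradient descent polynomial and reduce the problem to a scalar polynomial bound on $[\ell,L]$. By Example~\ref{example:gd}, $P_t(\lambda) = (1-h\lambda)^t$, hence $P_t'(\lambda) = -th(1-h\lambda)^{t-1}$.

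The first step is a short polynomial factorization:
\[
P_t(\HH) - P_t'(\HH)\HH \;=\; (1-h\HH)^t + th\HH(1-h\HH)^{t-1} \;=\; (1-h\HH)^{t-1}\bigl[\II + (t-1)h\HH\bigr].
\]
Substituting into the master identity gives
\[
\partial\xx_t - \partial\xx_\star \;=\; (1-h\HH)^{t-1}\Bigl\{[\II + (t-1)h\HH](\partial\xx_0-\partial\xx_\star) \;-\; th\,\partial_\ttheta\nabla f(\xx_0,\ttheta)\Bigr\},
\]
so both Jacobian-related terms share the same outer factor $(1-h\HH)^{t-1}$. This common factor is exactly what will allow the final bound to be written as a single $\max_{\lambda\in[\ell,L]}$ of a compact expression.

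Next, I would take Frobenius norms and apply the triangle inequality together with the standard spectral bound $\|p(\HH)\boldsymbol{M}\|_F \le \max_{\lambda\in[\ell,L]}|p(\lambda)|\,\|\boldsymbol{M}\|_F$, valid since $\HH$ is symmetric with spectrum in $[\ell,L]$. This produces a sum of two scalar maxima: one involving $(1-h\lambda)^{t-1}(1+(t-1)h\lambda)$ multiplying $\|\partial\xx_0-\partial\xx_\star\|_F$, and one involving $(1-h\lambda)^{t-1}$ multiplying $htG$. A short monotonicity check then shows that both polynomials attain their maximum at the same endpoint of $[\ell,L]$, so the sum of the two maxima equals the single max-of-sum expression stated in the theorem.

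The main obstacle is precisely this last collapse, since in general $\max f + \max g \ge \max(f+g)$ and I need equality here. It holds because the polynomials $|1-h\lambda|^{t-1}$ and $|1-h\lambda|^{t-1}(1+(t-1)h\lambda)$ share the same argmax on $[\ell,L]$: the derivative of the latter simplifies to $-t(t-1)h^2\lambda(1-h\lambda)^{t-2}$, which has constant sign on the interval, and $|1-h\lambda|^{t-1}$ is likewise monotone on each of the subintervals where $1-h\lambda$ keeps its sign. The remaining steps---the algebraic factorization and the application of the spectral norm bound---are routine.
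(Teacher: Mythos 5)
Your factorization $P_t(\HH) - P_t'(\HH)\HH = (1-h\HH)^{t-1}\bigl[\II + (t-1)h\HH\bigr]$ and the substitution into the master identity coincide with the paper's proof. The divergence, and the gap, is in the last step. The paper keeps both terms inside a \emph{single} norm and argues that the worst case occurs when both $\partial\xx_0(\ttheta)-\partial\xx_\star(\ttheta)$ and $\partial_\ttheta\nabla f(\xx_0(\ttheta),\ttheta)$ align with the same eigendirection of $\HH(\ttheta)$, which is what produces the single $\max_{\lambda\in[\ell,L]}$ of the combined expression. You instead apply the triangle inequality first, obtaining a \emph{sum of two separate maxima}, and then claim this equals the max of the sum because the two scalar functions share the same argmax. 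That claim is false exactly in the regime the theorem is meant to cover. Your derivative computation $-t(t-1)h^2\lambda(1-h\lambda)^{t-2}$ is valid only where $1-h\lambda\geq 0$; once $hL>1$ (e.g.\ the step size $h=2/(L+\ell)$ of Corollary~\ref{cor:worst_case_rates_gd}, which drives the whole burn-in discussion), the absolute values change the picture. Concretely, take $t=2$ and $h$ slightly below $2/(L+\ell)$ with $1-h\ell=0.9$, $hL-1=0.8$: then $|1-h\lambda|^{t-1}$ is maximized at $\lambda=\ell$ (value $0.9$ vs.\ $0.8$), while $|1-h\lambda|^{t-1}(1+(t-1)h\lambda)$ is maximized at $\lambda=L$ (value $0.8\cdot 2.8=2.24$ vs.\ $0.9\cdot 1.1=0.99$). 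On $(1/h,L]$ the function $|1-h\lambda|^{t-1}(1+(t-1)h\lambda)$ is in fact \emph{increasing}, so it is not monotone on $[\ell,L]$ and the argmaxes genuinely differ.

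Because $\max_\lambda f+\max_\lambda g\geq\max_\lambda(f+g)$ with strict inequality when the argmaxes differ, your route only yields the weaker bound with the maximum distributed over the two terms; it cannot recover the stated single-max bound for $h>1/L$. Your argument is fine for $h\leq 1/L$, where $1-h\lambda\geq 0$ throughout and both functions are decreasing on $[\ell,L]$, but the theorem is stated for all $h>0$ and its interesting content (the burn-in phase for large steps) lives precisely where your collapse step breaks. To close the gap you should follow the paper: after factoring out $(1-h\HH)^{t-1}$, bound the whole bracket at once by working in the eigenbasis of $\HH(\ttheta)$ (the worst case concentrates both quantities on the eigendirection attaining the maximum of the combined scalar expression), rather than separating the two terms before maximizing over $\lambda$.
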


\paragraph{Discussion.}
The bound above is a product of two terms: the first term, $\left( 1-h\lambda \right)^{t-1}$, is the convergence rate of gradient descent and  decreases exponentially for $h \leq \frac{2}{L + \ell}$, while the second term is increasing in $t$. This results in two distinct phases in training: an initial {\bfseries burn-in} phase, where the second term dominates and the Jacobian suboptimality might be increasing, followed by an {\bfseries linear convergence} phase where the exponential term dominates. This phenomenon can be seen empirically, see Figure \ref{fig:intro_jac}. As an illustration, the next corollary exhibits explicit rates for gradient descent in the two special cases where $h=\frac{1}{L}$ (short steps) and $h=\frac{2}{L+\ell}$ (large steps, maximize asymptotic rate).

\begin{figure}
    \centering
    \includegraphics[width=0.9\linewidth]{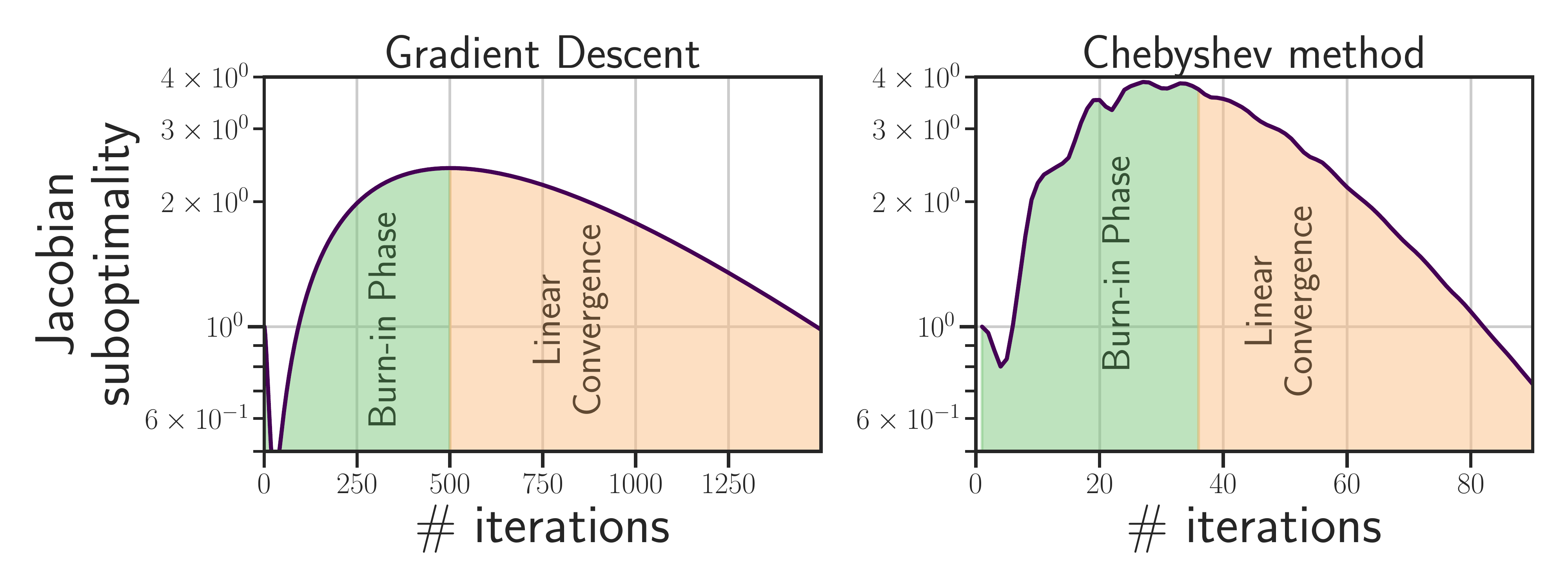}
    \caption{{\bfseries The Phases of Unrolling}.
    Gradient descent and the Chebyshev method exhibit two distinct phases during unrolled differentiation: an initial burn-in phase, where the Jacobian suboptimality increases, followed by a convergent phase with an asymptotic linear convergence. The maximum of the suboptimality is similar for both algorithms, but Chebyshev peaks sooner than gradient descent, after a number of iterations equal to (roughly) the square root of the one required by gradient descent. The distinct phases, as well as their relative duration, are predicted by the theoretical worst-case bounds of Corollary \ref{cor:worst_case_rates_gd} and Theorem \ref{thm:bound_cheby}.
    Both plots are run on the same problem, a ridge regression loss on the \texttt{breast-cancer} dataset. }
    \label{fig:phase_unrolling}
\end{figure}

\vspace{1em}
\begin{corollary}\label{cor:worst_case_rates_gd}
    For the step size $h=1/L$, the rate of Theorem \ref{thm:rate_gd} reads
    \begin{align*}
        \|\partial \xx_t(\ttheta) - \partial \xx_\star(\ttheta)\|_F & \leq \text{\colorbox{linearphase}{$\underbrace{\left(1-\kappa\right)^{t-1}}_{\text{exponential decrease}} $}}\Bigg\{\text{\colorbox{burnin}{$\underbrace{\vphantom{\left(1-\kappa\right)^{t-1}}\left(1+\kappa(t-1)\right)}_{\text{\vphantom{p}linear increase}}$}}\|\partial \xx_0(\ttheta)-\partial \xx_\star(\ttheta)\|_F + \text{\colorbox{burnin}{$\frac{t}{L}$}}G\,\Bigg\}.
    \end{align*}
    Assuming $G = 0$, the above bound is monotonically decreasing.

    If instead we take the worst-case optimal (for minimization) step size $h = 2/(L+\ell)$, then we have
    \begin{align*}
        \|\partial \xx_t(\ttheta) - \partial \xx_\star(\ttheta)\|_F & \leq \text{\colorbox{linearphase}{$\underbrace{\left(\tfrac{1-\kappa}{1+\kappa}\right)^{t-1}}_{\text{exponential decrease}}$}} {\Bigg\{}\text{\colorbox{burnin}{$\underbrace{\vphantom{\left(\tfrac{1-\kappa}{1+\kappa}\right)^{t-1}}|2t-1|}_{\text{\vphantom{p}linear increase}}$}}\,\|\partial \xx_0(\ttheta)-\partial \xx_\star(\ttheta)\|_F + \text{\colorbox{burnin}{$\frac{2t}{L+\ell}$}}{G\Bigg\}}\,.
    \end{align*}
    Moreover, assuming $G = 0$, the maximum of the upper bound over $t$ can go up to
    \[
        \|\partial \xx_t(\ttheta) - \partial \xx_\star(\ttheta)\|_F \leq O_{\kappa\rightarrow 0}\left(\tfrac{1}{\kappa}\|\partial \xx_0(\ttheta)-\partial \xx_\star(\ttheta)\|_F\right) \quad
        \text{at} \quad   t \approx \tfrac{1}{\kappa}\,.
    \]
\end{corollary}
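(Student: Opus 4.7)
The strategy is to specialize Theorem~\ref{thm:rate_gd} for each step size and bound the resulting $\lambda$-dependent expression over $[\ell,L]$ explicitly. For brevity, write $A:=\|\partial\xx_0(\ttheta)-\partial\xx_\star(\ttheta)\|_F$.

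For $h=1/L$, the quantity inside the max is $g(\lambda)=(1-\lambda/L)^{t-1}[(1+(t-1)\lambda/L)A+(t/L)G]$, which is non-negative on $[\ell,L]$ so the outer $|\cdot|$ drops out. I would differentiate once to get $g'(\lambda)=-t(t-1)h^2(1-h\lambda)^{t-2}[A\lambda+G]\le 0$, conclude that $g$ is non-increasing on $[\ell,L]$, and evaluate at $\lambda=\ell$, which after substituting $\kappa=\ell/L$ yields exactly the first bound. The monotonicity claim when $G=0$ reduces to checking $(1-\kappa)(1+\kappa t)\le 1-\kappa+\kappa t$, i.e.\ $\kappa^2 t\ge 0$, which is immediate.

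For $h=2/(L+\ell)$ the factor $(1-h\lambda)^{t-1}$ changes sign on $[\ell,L]$, so I would bound the two factors separately. The magnitude $|1-h\lambda|^{t-1}$ is maximized at both endpoints with the common value $((1-\kappa)/(1+\kappa))^{t-1}$, while the non-negative linear factor $(1+(t-1)h\lambda)A+htG$ is maximized at $\lambda=L$, giving $\tfrac{(2t-1)L+\ell}{L+\ell}A+\tfrac{2t}{L+\ell}G$. Multiplying the two suprema produces the stated bound up to the cosmetic simplification $\tfrac{(2t-1)L+\ell}{L+\ell}\le 2t-1$, which is equivalent to $1\le 2t-1$ and is therefore valid throughout (it is the only place where $t\ge 1$ enters and it is tight at $t=1$).

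For the final assertion (asymptotic peak when $G=0$), I would treat $t$ as continuous and maximize $B(t):=r^{t-1}(2t-1)$ with $r:=(1-\kappa)/(1+\kappa)$. The stationarity condition $\partial_t \log B(t)=\log r+2/(2t-1)=0$ gives $t^\star=\tfrac12+\tfrac{1}{-\log r}$. The expansion $-\log r=2\kappa+O(\kappa^3)$ as $\kappa\to 0$ yields $t^\star\sim\tfrac{1}{2\kappa}=\Theta(1/\kappa)$, and $r^{t^\star-1}\to e^{-1}$, so $B(t^\star)\sim e^{-1}/\kappa$, giving the claimed $O(1/\kappa)$ peak at $t\approx 1/\kappa$. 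The only real subtlety lies in the decoupling step for $h=2/(L+\ell)$: because the linear factor is non-negative, one may bound $|1-h\lambda|^{t-1}$ and the linear factor separately without losing anything, and the rest is essentially arithmetic.
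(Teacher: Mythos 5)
Your proposal is correct and follows essentially the same route the paper intends: the paper gives no separate proof of this corollary, treating it as a direct specialization of Theorem~\ref{thm:rate_gd}, which is exactly what you do---substituting the two step sizes, locating the maximizing $\lambda$ (at $\lambda=\ell$ for $h=1/L$, at $\lambda=L$ for $h=2/(L+\ell)$, where the decoupling of the two factors is indeed lossless), and checking monotonicity via the ratio test $\kappa^2 t\ge 0$. Your continuous-$t$ peak analysis is also sound; the only cosmetic remark is that the stationary point comes out at $t^\star\sim 1/(2\kappa)$ rather than $1/\kappa$, which is consistent with the statement since the ``$\approx$'' and the $O_{\kappa\to 0}(1/\kappa)$ bound are only meant up to constant factors.
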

In this Corollary, we see a trade-off between the linear convergence rate (exponential in $t$) and the linear growth in $t$. When the step size is small $(h=1/L)$, the linear rate is slightly slower than the rate of the larger step size ($h=2/(\ell+L)$. However, the term in $t$ is \textit{way smaller} for the small step size. This makes a big difference in the convergence: for $h=1/L$, there is \textit{no} local increase, which is not the case for $h=\frac{2}{\ell+L}$. In the next Corollary we provide a bound on the step size $h$ to guarantee a monotone convergence of the Jacobian.

\begin{restatable}{corollary}{monotonicity}\label{cor:monotonicity}
    Assuming $G=0$, the bound of Theorem \ref{thm:rate_gd} is monotonically decreasing for $t\geq 1$ if the step size $h$ from Theorem \ref{thm:rate_gd} satisfies $0<h<\sqrt{2}/L$.
\end{restatable}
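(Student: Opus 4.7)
The plan is to reduce the statement to a pointwise inequality. With $G=0$, Theorem~\ref{thm:rate_gd} gives
\[
  \|\partial \xx_t(\ttheta)-\partial \xx_\star(\ttheta)\|_F \leq \phi(t)\,\|\partial \xx_0(\ttheta)-\partial \xx_\star(\ttheta)\|_F,\qquad \phi(t) \defas \max_{\lambda\in[\ell,L]} \bigl|(1-h\lambda)^{t-1}\bigl(1+(t-1)h\lambda\bigr)\bigr|.
\]
So to prove $\phi(t+1)\leq\phi(t)$ for $t\geq 1$ it suffices to establish the pointwise inequality $|g_{t+1}(u)|\leq |g_t(u)|$ for every $u=h\lambda$ with $\lambda\in[\ell,L]$, where $g_t(u)\defas(1-u)^{t-1}(1+(t-1)u)$. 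Since $h<\sqrt{2}/L$, the admissible range satisfies $u\in(0,\sqrt{2})$.

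Next I would form the ratio
\[
  \frac{|g_{t+1}(u)|}{|g_t(u)|} \;=\; |1-u|\,\frac{1+tu}{1+(t-1)u},
\]
(which is well-defined for $u\neq 1$, and $g_t(1)=0$ for $t\geq 2$ handles the degenerate point trivially) and argue it is bounded by $1$. Split into two cases. For $0<u\leq 1$, drop the absolute value to get $(1-u)(1+tu)\leq 1+(t-1)u$; expanding both sides collapses to $-tu^2\leq 0$, which is immediate. For $1<u<\sqrt{2}$, the required inequality $(u-1)(1+tu)\leq 1+(t-1)u$ rearranges to $tu(u-2)+2(u-1)\leq 0$, equivalently
\[
  \varphi(u)\;\defas\;\frac{2(u-1)}{u(2-u)}\;\leq\; t.
\]

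The final step is to bound $\varphi$ on $(1,\sqrt{2})$. A direct derivative calculation gives $\varphi'(u)=\frac{2\bigl((u-1)^2+1\bigr)}{\bigl(u(2-u)\bigr)^2}>0$, so $\varphi$ is increasing, and a one-line computation yields $\varphi(\sqrt{2})=1$. Therefore $\varphi(u)<1\leq t$ for every $u\in(1,\sqrt{2})$ and every integer $t\geq 1$, which completes the pointwise bound and hence the monotonicity of $\phi$. The only mildly delicate step is the second case, which is the one that both forces the threshold $\sqrt{2}$ to appear (via $\varphi(\sqrt{2})=1$) and explains why the short-step regime $h<\sqrt{2}/L$ is exactly what is needed; the rest is routine algebra.
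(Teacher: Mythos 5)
Your proposal is correct, and it reaches the same $\sqrt{2}/L$ threshold as the paper, but by a genuinely different route. The paper treats $t$ as a continuous variable: for $h\lambda\le 1$ it differentiates $(1-h\lambda)^{t-1}(1+(t-1)h\lambda)$ in $t$ and argues the derivative is negative, and for $h\lambda>1$ it shows the function of $t$ has a unique interior maximum and then pins down the threshold by solving for when the bound at $t=1$ equals the bound at $t=2$, i.e.\ $(h\lambda-1)(1+h\lambda)=1$, giving $h\lambda=\sqrt{2}$. You instead prove the discrete, pointwise inequality $|g_{t+1}(u)|\le|g_t(u)|$ for every $u=h\lambda\in(0,\sqrt{2})$ by bounding the ratio $|1-u|\tfrac{1+tu}{1+(t-1)u}$: trivial algebra for $u\le 1$ (the $-tu^2\le 0$ cancellation), and for $u\in(1,\sqrt{2})$ the auxiliary function $\varphi(u)=\tfrac{2(u-1)}{u(2-u)}$, which you correctly show is increasing with $\varphi(\sqrt{2})=1$, so $\varphi(u)<1\le t$; I checked the derivative computation $\varphi'(u)=\tfrac{2((u-1)^2+1)}{(u(2-u))^2}$ and the reduction from the max over $[\ell,L]$ to the pointwise statement, and both are sound, as is your handling of $u=1$. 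What each approach buys: yours is more elementary and uniform in $t$ — it avoids the paper's somewhat informal unimodality-plus-endpoint-matching step and its loosely stated logarithmic inequality in the small-step case — while the paper's continuous-$t$ analysis additionally locates the critical time $t_\star$ of the worst-case peak, which ties into the burn-in discussion elsewhere in the paper; note also that in the paper's route the $\sqrt{2}$ emerges from equating the $t=1$ and $t=2$ values, whereas in yours it emerges as the root of $\varphi(u)=1$, two faces of the same boundary case.
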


This bound contrasts with the condition on the step size of gradient descent, which is $h\leq 2/L$, with an optimal value of $h=2/(\ell+L)$ \citep{nesterov2004introductory}. This trade-off between asymptotic rate and length of the burn-in phase leads us to formulate:

\titlebox{The curse of unrolling}{\parbox{12.5cm}{
To ensure convergence of the Jacobian with gradient descent, we must either \textbf{1)} accept that the algorithm has a burn-in period proportional to the condition number $1/\kappa$, or \textbf{2)} choose
a small step size that will \textit{slow down} the algorithm's asymptotic convergence.
}}

\subsection{Worst-case Rates for the Chebyshev method}
\label{sub:worst_case_cheby}

We now derive a convergence-rate analysis for the Chebyshev method, which achieves the best worst-case convergence rate for the minimization of a quadratic function with a bounded spectrum.

\paragraph{Chebyshev method and Chebyshev polynomials.} We recall the properties of the Chebyshev method (see e.g. \citep[Section 2]{d2021acceleration} for a survey). As mentioned in \S \ref{scs:worstcase}, the rate of convergence of a first-order method associated with the residual polynomial $P_t$ can be upper bounded by $\max_{\lambda \in[\ell,\,L]} |P_t(\lambda)|$. Let $\tilde C_t$ be the Chebyshev polynomial of the first kind of degree $t$, and define
\begin{align}\label{def:chebyshev_poly}
    C_t(\lambda) = \tfrac{\tilde C_t(m(\lambda))}{\tilde C_t(m(0))},\qquad m : [\ell,L]\rightarrow [0,1],\,\;\; m(\lambda) = \tfrac{2\lambda-L-\ell}{L-\ell}.
\end{align}
A known property of Chebyshev polynomials is that the \textit{shifted and normalized} Chebyshev polynomial $C_t$ is the residual polynomial with smallest maximum value in the $[\ell, L]$ interval.
This implies that the Chebyshev method, which is the method associated with this polynomial, enjoys the \textit{best} worst-case convergence bound on quadratic functions. Algorithmically speaking, the Chebyshev method reads,
\[
    \xx_{t}(\ttheta) = \xx_{t-1}(\ttheta) - h_t \nabla f(\xx_{t-1}(\ttheta), \ttheta) + m_t(\xx_{t-1}(\ttheta)-\xx_{t-2}(\ttheta))\,,
\]
where $h_t$ is the step size and $m_t$ the momentum. Those parameters are time-varying and depend only on $\ell$ and $L$.
The following Proposition shows the rate of convergence of the Chebyshev method.

\begin{restatable}[Jacobian Suboptimality Rate for Chebyshev Method]{theorem}{boundcheby}\label{thm:bound_cheby} Under Assumptions~\ref{assump:quadratic},\ref{assump:commutative},
let $\xi \defas (1-\sqrt{\kappa})/(1+\sqrt{\kappa})$, and $\xx_t(\ttheta)$ denote the $t^{\text{th}}$ iterate of the Chebyshev method. Then, we have the following convergence rate
    \begin{align*}
        \|\partial \xx_t(\ttheta) - \partial \xx_\star(\ttheta)\|_F & \leq \text{\colorbox{linearphase}{$\underbrace{\left(\tfrac{2}{\xi^t+\xi^{-t}}\right)}_{\text{exponential decrease}}$}} \Bigg\{\text{\colorbox{burnin}{$\underbrace{\vphantom{\left(\tfrac{1-\kappa}{1+\kappa}\right)^{t-1}}\left| \tfrac{2t^2}{1-\kappa}-1 \right|}_{\text{\vphantom{p}quadratic increase}}$}}\|\partial \xx_0(\ttheta) - \partial \xx_\star(\ttheta)\|_F + \text{\colorbox{burnin}{$\vphantom{\left(\tfrac{1-\kappa}{1+\kappa}\right)^{t-1}} \frac{2t^2}{L-\ell}$}} G  \Bigg\}\,.
    \end{align*}
    In short, the rate of the Chebyshev algorithm for unrolling is $O( t^2\xi^t)$. Moreover, assuming $G = 0$, the maximum of the upper bound over $t$ can go up to
    \[
        \|\partial \xx_t(\ttheta) - \partial \xx_\star(\ttheta)\|_F \leq O_{\kappa\rightarrow 0}\left(\tfrac{2}{\kappa}\|\partial \xx_0(\ttheta)-\partial \xx_\star(\ttheta)\|_F\right) \quad \text{at}\quad t\approx 2\sqrt{\tfrac{1}{\kappa}}\,.
    \]
\end{restatable}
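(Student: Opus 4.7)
The plan is to plug the Chebyshev residual polynomial $P_t = C_t$ into the master identity of Theorem~\ref{thm:master_identity}, pass to Frobenius norms using sub-multiplicativity, and use that $\HH(\ttheta)$ is symmetric with spectrum in $[\ell,L]$ to reduce the problem to two scalar bounds:
\[
\|\partial\xx_t - \partial\xx_\star\|_F \leq \sup_{\lambda\in[\ell,L]}|C_t(\lambda) - \lambda C_t'(\lambda)|\,\|\partial\xx_0 - \partial\xx_\star\|_F + \sup_{\lambda\in[\ell,L]} |C_t'(\lambda)|\,G.
\]
The exponential factor $2/(\xi^t+\xi^{-t})$ in the final bound will come out of the Chebyshev normalizer $\tilde{C}_t(m(0))$, while the quadratic prefactor $O(t^2)$ will come from the Markov brothers' inequality $|\tilde{C}_t'(x)| \leq t^2$ on $[-1,1]$.

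After the change of variables $x = m(\lambda)$, I would write $C_t(\lambda) = \tilde{C}_t(x)/\tilde{C}_t(m(0))$ and, by the chain rule, $C_t'(\lambda) = \tfrac{2}{L-\ell}\tilde{C}_t'(x)/\tilde{C}_t(m(0))$. The identity $\tilde{C}_t(\cosh \eta) = \cosh(t\eta)$ together with $\cosh\eta = (L+\ell)/(L-\ell)$ gives $|\tilde{C}_t(m(0))| = (\xi^{-t}+\xi^t)/2$, so Markov's inequality immediately yields
\[
\sup_{\lambda\in[\ell,L]} |C_t'(\lambda)| \leq \frac{2t^2}{L-\ell}\cdot\frac{2}{\xi^t+\xi^{-t}},
\]
matching the $G$-coefficient in the statement. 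For the other supremum, the same change of variables turns it into a bound on $|\tilde{C}_t(x) - (x+\sigma)\tilde{C}_t'(x)|$ over $x \in [-1,1]$, with $\sigma = (1+\kappa)/(1-\kappa)$; evaluating at the endpoint $x=1$ (i.e.\ $\lambda=L$) gives $|1 - (1+\sigma)t^2| = |2t^2/(1-\kappa) - 1|$, as claimed.

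Once these two sup bounds are established, the peak analysis is purely algebraic: maximize $g(t) := \frac{4t^2/(1-\kappa)}{\xi^t+\xi^{-t}}$ in $t$. Since $\xi^{-t}$ dominates $\xi^t$, one has $g(t) \approx 4t^2\xi^t/(1-\kappa)$, and setting $\frac{d}{dt}\log g = 2/t - \ln(1/\xi) = 0$ gives an optimizer $t^\star = 2/\ln(1/\xi) = \Theta(1/\sqrt{\kappa})$, at which $g(t^\star) = \Theta(1/\kappa)$, matching the stated asymptotics.

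The hard part is actually showing that the supremum of $|\tilde{C}_t(x) - (x+\sigma)\tilde{C}_t'(x)|$ over $[-1,1]$ is controlled (up to the desired constant) by its value at $x=1$: a naive triangle inequality only gives the looser $1 + (1+\sigma)t^2$ rather than $(1+\sigma)t^2 - 1$. I would close this gap by inspecting the critical points of $x \mapsto \tilde{C}_t(x) - (x+\sigma)\tilde{C}_t'(x)$, exploiting that $x+\sigma > 0$ throughout $[-1,1]$ since $\sigma > 1$, or alternatively by using the identity $C_t(\lambda) - \lambda C_t'(\lambda) = 1 - \int_0^\lambda s\,C_t''(s)\,\dif s$ with a sharp Markov-type bound on $C_t''$. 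The remaining bookkeeping around signs of the normalizer and the $\kappa$-dependence is then routine.
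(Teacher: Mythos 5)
Your proposal follows essentially the same route as the paper's proof: plug the Chebyshev residual polynomial into the master identity, reduce to the two scalar suprema over $[\ell,L]$, evaluate at the endpoint $\lambda=L$ (where $\tilde C_t(1)=1$ and $\tilde C_t'(1)=t\,\tilde U_{t-1}(1)=t^2$, which is exactly your Markov-type bound), and compute the normalizer $|\tilde C_t(m(0))|=(\xi^t+\xi^{-t})/2$ to get the $2/(\xi^t+\xi^{-t})$ factor and the peak at $t\approx 2/\sqrt{\kappa}$. The one step you flag as ``the hard part'' --- that $|\tilde C_t(x)-(x+\sigma)\tilde C_t'(x)|$ over $[-1,1]$ is maximized at $x=1$ --- is precisely the step the paper also leaves unproved (it simply asserts the maximum is attained at the end of the interval), so your sketch is no less complete than the published argument on that point.
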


\paragraph{Discussion.} Despite being optimal for minimization, the rate of the Chebyshev method has an additional $O(t^2)$ factor. Due to this term, the bound diverges at first, similarly to gradient descent with the optimal step size $h=\frac{2}{\ell+L}$, but sooner. This behavior is visible on Figure \ref{fig:phase_unrolling}.

\section{Accelerated Unrolling: How fast can we differentiate through optimization?}
\label{sec:accelerated_unroling}

We now show to accelerate unrolled differentiation. We first derive a lower bound on the Jacobian suboptimality and then propose a method based on \textit{Sobolev orthogonal polynomials} \citep{marcellan2015sobolev}, which are extremal polynomials for a norm involving both the polynomial and its derivative. 

\subsection{Unrolling is at least as hard as optimization}

\begin{restatable}{proposition}{lowerboundunrolling}\label{prop:lower_bound_unrolling}
    Let $\xx_t$ be the $t$-th iterate of a first-order method. Then, for all iterations $t$ and for all $\ttheta$, there exists a quadratic function $f$ that verifies Assumption \ref{assump:quadratic} such that $G=0$, and
    \begin{equation}\label{eq:lower_bound_unrolling}
        \|\partial \xx_t(\ttheta) - \partial \xx_\star(\ttheta)\|_F \geq \frac{2}{\xi^t+\xi^{-t}}\|\partial \xx_0(\ttheta) - \partial \xx_\star(\ttheta)\|_F, \quad \xi = \frac{1-\sqrt{\kappa}}{1+\sqrt{\kappa}}\,.
    \end{equation}
\end{restatable}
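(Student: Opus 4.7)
The plan is to combine the master identity of Theorem~\ref{thm:master_identity} with the classical Chebyshev extremal property of residual polynomials, applied not to the algorithm's polynomial $P_t$ itself but to the auxiliary polynomial $Q_t(\lambda)\defas P_t(\lambda)-\lambda P_t'(\lambda)$.

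First, setting $G=0$ in Theorem~\ref{thm:master_identity} reduces it to
\begin{equation*}
    \partial \xx_t(\ttheta) - \partial \xx_\star(\ttheta) \;=\; Q_t(\HH(\ttheta))\bigl(\partial \xx_0(\ttheta) - \partial \xx_\star(\ttheta)\bigr).
\end{equation*}
The key algebraic observation is that $Q_t(0)=P_t(0)-0\cdot P_t'(0)=1$ and $\deg Q_t\le t$, so $Q_t$ is itself a residual polynomial of degree at most $t$. The Chebyshev extremal property recalled in Section~\ref{sub:worst_case_cheby} then yields
\begin{equation*}
    \max_{\lambda\in[\ell,L]}|Q_t(\lambda)| \;\geq\; \min_{\substack{R(0)=1 \\ \deg R\le t}}\max_{\lambda\in[\ell,L]}|R(\lambda)| \;=\; \frac{2}{\xi^t+\xi^{-t}}.
\end{equation*}

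Next, I would exhibit a concrete scalar-Hessian instance saturating this bound. Pick $\lambda^*\in(\ell,L)$ with $|Q_t(\lambda^*)|\ge 2/(\xi^t+\xi^{-t})$; such an interior point exists by continuity of $|Q_t|$ (and, in the degenerate case $\max|Q_t|=2/(\xi^t+\xi^{-t})$, by the equioscillation of the extremizer $C_t$, which is then forced to coincide with $Q_t$). Take $\ttheta\in\RR$, and set
\begin{equation*}
    \HH(\ttheta)=\psi(\ttheta)\,\II,\qquad \bb(\ttheta)\equiv\bb_0\neq 0,\qquad \xx_0(\ttheta)\equiv 0,
\end{equation*}
for some smooth $\psi:\RR\to[\ell,L]$ with $\psi(0)=\lambda^*$ and $\psi'(0)\neq 0$. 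Assumptions~\ref{assump:quadratic} and~\ref{assump:commutative} are verified (commutativity is trivial since $\HH$ is a scalar matrix), and $G=\|\partial\HH(\ttheta)\,\xx_0(\ttheta)+\partial\bb(\ttheta)\|_F=0$ by the choice of $\xx_0$ and $\bb$. Evaluating at $\ttheta=0$, the scalar Hessian collapses $Q_t(\HH(0))=Q_t(\lambda^*)\II$, giving $\|\partial\xx_t-\partial\xx_\star\|_F=|Q_t(\lambda^*)|\cdot\|\partial\xx_0-\partial\xx_\star\|_F$, while $\partial\xx_\star(0)=\psi'(0)\bb_0/(\lambda^*)^2\neq 0$ ensures non-triviality. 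Combining with the Chebyshev bound above closes the argument.

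The main conceptual step is the identity $Q_t(0)=1$: the differentiation map $P\mapsto P-\lambda P'$ preserves the residual-polynomial constraint, which is precisely what allows the Chebyshev lower bound for \emph{minimization} to be imported into the Jacobian setting. Once this is recognized, the adversarial instance can be taken as simple as a scalar Hessian, since all the relevant information is encoded in a single eigenvalue $\lambda^*$. The only genuinely delicate point is certifying that $\lambda^*$ can be chosen strictly inside $(\ell,L)$ so that a smooth $\psi$ with $\psi'(0)\neq 0$ exists while respecting $\ell\II\preceq\HH(\ttheta)\preceq L\II$; the equioscillation property of $C_t$ supplies interior extremizers exactly when needed.
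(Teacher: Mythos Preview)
Your argument is correct and rests on the same central reduction the paper uses: apply the master identity with $G=0$ to obtain $\partial\xx_t-\partial\xx_\star=Q_t(\HH)(\partial\xx_0-\partial\xx_\star)$ for $Q_t(\lambda)=P_t(\lambda)-\lambda P_t'(\lambda)$, observe that $Q_t(0)=1$, and then invoke the Chebyshev extremal property $\max_{\lambda\in[\ell,L]}|Q_t(\lambda)|\ge 2/(\xi^t+\xi^{-t})$.

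Where you differ from the paper is in the adversarial instance. The paper embeds an arbitrary quadratic into a ridge-regression template $f(\xx,\theta)=\tfrac12(\|\AA\xx-\bb\|^2+\alpha\theta\|\xx-\xx_0\|^2)$ to force $G=0$, and then appeals to Nemirovski's optimization lower bound together with the remark that $Q_t$ satisfies the \emph{additional} constraint $Q_t'(0)=0$, so the admissible set of $Q_t$'s is strictly smaller than that of residual polynomials and the lower bound can only worsen. You instead build the instance by hand with a scalar Hessian $\HH(\theta)=\psi(\theta)\II$ placed at the eigenvalue $\lambda^\ast$ that maximizes $|Q_t|$. This is more elementary and self-contained: it bypasses the ridge-regression embedding and the informal ``more constraints'' step, at the cost of the small technical detour about choosing $\lambda^\ast$ in the open interval. (Incidentally, because $Q_t'(0)=0$ while $C_t'(0)\neq 0$ for $t\ge 1$, the degenerate case $Q_t=C_t$ you guard against cannot actually occur for $t\ge 1$, so the equioscillation clause is only needed at $t=0$, where $Q_0\equiv 1$ and any interior point works.) Both routes yield the same bound; yours is the more explicit of the two.
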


This result tells us that unrolling is \textit{at least} as difficult as optimization. Indeed, the rate in \eqref{eq:lower_bound_unrolling} is known to be the lower bound on the accuracy for minimizing smooth and strongly convex function \citep{nemirovski1995information}. Moreover, although we are not sure if the lower bound is tight for all $t$, we have that when $t\rightarrow \infty$, the rate of Chebyshev method matches the above rate.

\subsection{Average-Case Accelerated Unrolling with Sobolev Polynomials}
\label{sub:acc_sobolev}

We now describe an accelerated method for unrolling based on Sobolev polynomials. We first introduce the definition of the Sobolev scalar product for polynomials.

\begin{definition}
    The Sobolev scalar product (and its norm) for two polynomials $P,\,Q$ and a density function $\mu$ is defined as
    \[
        \langle P,\, Q\rangle_{\eta} \defas \int_{\mathbb{R}} P(\lambda)Q(\lambda) \dif \mu + \eta \int_{\mathbb{R}} P'(\lambda)Q'(\lambda) \dif \mu, \quad \|P\|^2_{\eta} \defas \langle P, P \rangle_{\eta}\,.
    \]
\end{definition}

In the following we'll assume $\mu$ is the \textit{expected spectral density} associated with the current problem class and discuss in the next section some practical choices.
Using this scalar product, we can compute a (loose) upper-bound for $\|\partial \xx_t(\ttheta) - \partial \xx_\star(\ttheta)\|_F$ and in Prop. \ref{prop:optimal_sobolev},  a polynomial minimizing this bound.

\begin{restatable}{proposition}{upperboundjacobian}\label{prop:upper_bound_jacobian}
    Assume that $\|\partial\HH(\ttheta) (\xx_0(\ttheta) - \xx_\star(\ttheta))\|_F\leq \eta \|\partial \xx_0(\ttheta) - \partial \xx_\star(\ttheta)\|_F$. Then, under Assumption \ref{assump:quadratic}, \ref{assump:commutative} and \ref{assump:first-order}, we have the following bound for the average-case rate
    \[
        \mathbb{E}_{\HH(\ttheta)}\|\partial \xx_t(\ttheta) - \xx_\star(\ttheta)\|_F^2 \leq 2 \|P_t\|^2_{\eta}  \, \mathbb{E}_{\HH(\ttheta)}\|\partial \xx_0(\ttheta) - \partial \xx_\star(\ttheta)\|_F^2 .
    \]
\end{restatable}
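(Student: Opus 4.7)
The plan is to unfold the master identity of Theorem~\ref{thm:master_identity}, apply the elementary inequality $\|a+b\|_F^2 \leq 2\|a\|_F^2 + 2\|b\|_F^2$ to decouple the $P_t$ and $P_t'$ contributions, and then invoke the average-case spectral identity (as in Eq.~\eqref{eq:average-case_analysis}) on each piece separately. The Sobolev norm will appear naturally because these two pieces integrate $P_t^2$ and $(P_t')^2$ against the expected spectral density $\mu$, respectively.

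First I would rewrite the inhomogeneous term in a form suited to this splitting. Using $\nabla f(\xx,\ttheta) = \HH(\ttheta)\xx + \bb(\ttheta)$ together with the implicit relation $\HH\,\partial\xx_\star + \partial\HH\,\xx_\star + \partial\bb = 0$ obtained by differentiating $\nabla f(\xx_\star,\ttheta)=0$, one gets $\partial_\ttheta\nabla f(\xx_0,\ttheta) = \partial\HH(\ttheta)(\xx_0 - \xx_\star) - \HH(\ttheta)\partial\xx_\star$. Plugging this into the master identity and using the standard convention that the initialization does not depend on $\ttheta$ so $\partial\xx_0 = 0$, the identity collapses to
\[
    \partial\xx_t(\ttheta) - \partial\xx_\star(\ttheta) \;=\; P_t(\HH(\ttheta))(\partial\xx_0 - \partial\xx_\star) \;+\; P_t'(\HH(\ttheta))\,\partial\HH(\ttheta)(\xx_0 - \xx_\star).
\]

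Next I would square in Frobenius norm, apply $(a+b)^2 \leq 2a^2 + 2b^2$, and take expectation over $\HH(\ttheta)$. For the first term, the average-case identity yields $\mathbb{E}\|P_t(\HH)(\partial\xx_0 - \partial\xx_\star)\|_F^2 = \mathbb{E}\|\partial\xx_0 - \partial\xx_\star\|_F^2\int P_t^2\,\dif\mu$. For the second term, Assumption~\ref{assump:commutative} lets $P_t'(\HH)$ commute through $\partial\HH$ slice by slice, so $P_t'(\HH)\partial\HH(\xx_0-\xx_\star) = \partial\HH\,P_t'(\HH)(\xx_0-\xx_\star)$; the analogous spectral-density argument, now using simultaneous diagonalizability of $\HH$ and $\partial\HH$, gives $\mathbb{E}\|\partial\HH\,P_t'(\HH)(\xx_0-\xx_\star)\|_F^2 = \mathbb{E}\|\partial\HH(\xx_0-\xx_\star)\|_F^2\int (P_t')^2\,\dif\mu$. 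Combining the two inequalities and invoking the hypothesis in the squared form $\|\partial\HH(\xx_0 - \xx_\star)\|_F^2 \leq \eta\,\|\partial\xx_0 - \partial\xx_\star\|_F^2$ produces the claimed $2\|P_t\|^2_\eta\,\mathbb{E}\|\partial\xx_0 - \partial\xx_\star\|_F^2$.

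The main obstacle is the second spectral-density identity, since it concerns the operator-and-vector product $\partial\HH\cdot P_t'(\HH)(\xx_0 - \xx_\star)$ rather than a pure polynomial of $\HH$ acting on a fixed vector. Beyond the commutativity of Assumption~\ref{assump:commutative}, it requires the same independence/isotropy condition that underlies Eq.~\eqref{eq:average-case_analysis}, so that in the common eigenbasis the weights coming from $\partial\HH$ and from $\xx_0-\xx_\star$ can be decoupled in expectation from the random eigenvalues $\lambda_i(\HH)$. One also has to be careful with the $\eta$ vs.\ $\eta^2$ convention, reading the hypothesis squared so that the weight on $\int (P_t')^2\,\dif\mu$ matches the definition of $\|P_t\|^2_\eta$.
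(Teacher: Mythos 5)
Your decomposition and overall skeleton match the paper's: you rearrange the master identity into $P_t(\HH)(\partial\xx_0-\partial\xx_\star) + P_t'(\HH)\,\partial\HH(\xx_0-\xx_\star)$ (note this collapse holds with no assumption that $\partial\xx_0=0$; it is just the derivative of \eqref{eq:residual_polynomial} under Assumption~\ref{assump:commutative}, and indeed your formula still contains $\partial\xx_0$), split with $(a+b)^2\le 2a^2+2b^2$, and use the hypothesis in squared form so that $\|P_t\|^2_\eta$ appears. The genuine gap is in how you evaluate the two expectations. You assert exact average-case identities in the spirit of \eqref{eq:average-case_analysis}, in particular $\mathbb{E}\|\partial\HH\,P_t'(\HH)(\xx_0-\xx_\star)\|_F^2 = \mathbb{E}\|\partial\HH(\xx_0-\xx_\star)\|_F^2\int (P_t')^2\,\dif\mu$. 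This does not follow from the proposition's assumptions: $\partial\HH$ and $\HH$ are strongly dependent (by Assumption~\ref{assump:commutative} they commute and are simultaneously diagonalizable), so in the common eigenbasis the weights built from $\partial\HH$ and $\xx_0-\xx_\star$ cannot be decoupled in expectation from the eigenvalues $\lambda_i(\HH(\ttheta))$ without an additional isotropy/exchangeability hypothesis that is neither stated in the proposition nor supplied by the independence assumption underlying \eqref{eq:average-case_analysis}, which concerns only $\xx_0-\xx_\star$. You identify this yourself as the main obstacle, but the proposal leaves it unresolved, so the second term is never actually bounded.

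The repair — and the route the paper takes — is to observe that only an inequality is claimed, so exact spectral identities are unnecessary: bound each term by submultiplicativity of the Frobenius norm, $\|P_t(\HH)\,M\|_F\le\|P_t(\HH)\|_F\|M\|_F$ and $\|P_t'(\HH)\,\partial\HH(\xx_0-\xx_\star)\|_F\le\|P_t'(\HH)\|_F\,\|\partial\HH(\xx_0-\xx_\star)\|_F$, apply the hypothesis to the last factor, and then use $\|P_t(\HH)\|_F^2=\tr\big(P_t(\HH)^2\big)=\sum_i P_t(\lambda_i)^2$ (and likewise for $P_t'$), whose expectation is precisely an integral of $P_t^2$, respectively $(P_t')^2$, against the expected spectral density. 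This sidesteps any decoupling between $\partial\HH$ and the spectrum of $\HH$, at the cost of a looser constant, which is acceptable since the stated bound is itself advertised as loose. Your reading of the hypothesis in squared form (producing $\eta$ rather than $\eta^2$) coincides with how the paper itself uses it, so that point is not a differentiator.
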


\begin{restatable}{proposition}{optimalsobolev}\label{prop:optimal_sobolev}
    Let $\{S_t\}$ be a sequence of orthogonal Sobolev polynomials, i.e., $\langle S_i,\;S_j \rangle >0$ if $i=j$ and $0$ otherwise, normalized such that $S_i(0)=1$. Then, the residual polynomial that minimizes the Sobolev norm can be constructed as
    \[
        P_t^\star = \argmin_{P\in\mathcal{P}_t:P(0)=1} \langle P,\;P\rangle_{\eta} = \frac{1}{A_t}\sum_{i=0}^t  a_i S_i,\quad \text{where} \quad a_i = \frac{1}{\|S_t\|^2_{\eta}} \quad \text{and} \quad A_t = \sum_{i=0}^t  a_i\,.
    \]
    Moreover, we have that $\|P_t^{\star}\|^2_{\eta} = 1/A_t$.
\end{restatable}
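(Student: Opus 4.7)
The plan is to exploit the orthogonal basis $\{S_i\}_{i=0}^t$ to turn the constrained functional minimization into an elementary finite-dimensional quadratic program. Since $\{S_0,\ldots,S_t\}$ are $t+1$ orthogonal (hence linearly independent) polynomials with $\deg S_i \leq i$, they span $\mathcal{P}_t$. So I can write any candidate residual polynomial uniquely as $P = \sum_{i=0}^t c_i S_i$.

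Next I would translate both the objective and the constraint into coordinates $(c_0,\ldots,c_t)$. The normalization $S_i(0) = 1$ makes the affine constraint collapse to $P(0) = \sum_{i=0}^t c_i = 1$, while Sobolev orthogonality $\langle S_i, S_j\rangle_\eta = 0$ for $i \ne j$ diagonalizes the quadratic objective into
\begin{equation*}
\|P\|_\eta^2 \;=\; \Bigl\langle \sum_i c_i S_i,\; \sum_j c_j S_j \Bigr\rangle_\eta \;=\; \sum_{i=0}^t c_i^2\, \|S_i\|_\eta^2.
\end{equation*}
So the problem becomes: minimize $\sum_{i=0}^t c_i^2 \|S_i\|_\eta^2$ subject to $\sum_{i=0}^t c_i = 1$.

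This is a standard separable quadratic program with a single linear equality constraint, solvable by one Lagrange multiplier. Setting the gradient of $\sum c_i^2 \|S_i\|_\eta^2 - \lambda (\sum c_i - 1)$ to zero gives $c_i = \lambda/(2\|S_i\|_\eta^2) = (\lambda/2)\, a_i$, where $a_i \defas 1/\|S_i\|_\eta^2$ (I read the statement's $\|S_t\|$ as a typo for $\|S_i\|$, as indicated by the indexing of $a_i$). Substituting into the constraint fixes $\lambda/2 = 1/A_t$, so $c_i = a_i/A_t$, yielding exactly the claimed $P_t^\star = \tfrac{1}{A_t}\sum_{i=0}^t a_i S_i$. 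Plugging back in,
\begin{equation*}
\|P_t^\star\|_\eta^2 \;=\; \sum_{i=0}^t \frac{a_i^2}{A_t^2}\, \|S_i\|_\eta^2 \;=\; \frac{1}{A_t^2}\sum_{i=0}^t a_i \;=\; \frac{1}{A_t}.
\end{equation*}

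There is essentially no serious obstacle: the only non-trivial ingredient is that $\{S_i\}_{i=0}^t$ actually is a basis of $\mathcal{P}_t$, which follows from Sobolev orthogonality together with $\deg S_i \le i$ (otherwise the normalization $S_i(0)=1$ and orthogonality could clash at $i=0$, but $S_0$ is a nonzero constant and $\langle S_0, S_0\rangle_\eta = \int 1\,d\mu > 0$ guarantees $a_0$ is finite, so $A_t$ is well defined and positive). If one wanted a coordinate-free argument, the same conclusion follows from observing that $P \mapsto \|P\|_\eta^2$ is a positive-definite quadratic form on $\mathcal{P}_t$ (the constant $1$ is excluded from the constraint set only if $\|S_0\|_\eta=0$, which does not occur here), so the constrained minimum is attained and unique, and the stationarity condition $\langle P_t^\star, Q\rangle_\eta = 0$ for all $Q\in\mathcal{P}_t$ with $Q(0)=0$ singles out the same $P_t^\star$.
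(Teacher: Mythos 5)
Your proposal is correct and follows essentially the same route as the paper's proof: expand $P$ in the orthogonal Sobolev basis, use $S_i(0)=1$ to reduce the constraint to $\sum_i c_i = 1$, diagonalize the objective via orthogonality, and solve the resulting quadratic program with one Lagrange multiplier, then substitute back to get $\|P_t^\star\|_\eta^2 = 1/A_t$. Your reading of $a_i = 1/\|S_i\|_\eta^2$ (correcting the statement's index typo) matches the paper, and your added remarks on the basis property and positivity only make explicit what the paper leaves implicit.
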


\paragraph{Limited burn-in phase.} Using the algorithm associated with $P^\star$ with parameters $\eta$ and $\mu$, we have
\[
    \mathbb{E}_{\HH(\ttheta)} \|\partial \xx_t(\ttheta) - \partial \xx_\star(\ttheta)\|_F^2 \leq 2\cdot \mathbb{E}_{\HH(\ttheta)} \|\partial \xx_0(\ttheta) - \xx_\star(\ttheta)\|_F^2\,.
\]
This inequality follows directly from Proposition \ref{prop:upper_bound_jacobian} and the optimality of $P_t$ for $\|\cdot\|_\eta$: we have that $\|P_t\|_\eta \leq \|P_{t-1}\|_\eta$ (because $P_{t-1}$ is a feasible solution for $P_t$) and $\|P_t\|_\eta \leq 1$ (because $P_t=1$ is a feasible solution for any $t>1$). That is much better than the maximum bump of ($O(1/\kappa)$ from gradient descend (Theorem~\ref{cor:worst_case_rates_gd}) and from Chebyshev (Theorem \ref{thm:bound_cheby}).

\subsection{Gegenbaueur-Sobolev Algorithm}\label{scs:sobolevalgo}
\label{sub:avg_gegen}

In most practical scenarios one does not have access to the expected spectral density $\mu$. Furthermore, the rates of average-case accelerated algorithms have been shown to be robust with respect to distribution mismatch~\citep{cunha2021only}.
In these cases, we can approximate the expected spectral density by some distribution that has the same support. A classical choice is the Gegenbauer parametric family indexed by $\alpha \in\mathbb{R}$, which encompasses important distributions such as the density associated with Chebyshev's polynomials or the uniform distribution:
\begin{equation}\label{eq:gegenbaueur_density}
    \mu(\lambda) = \tilde \mu(m(\lambda)), \quad \tilde \mu(x) = (1-x^2)^{\alpha-\frac{1}{2}}\quad \text{and} \quad  m : [\ell,L]\rightarrow [0,1],\, m(\lambda) = \frac{2\lambda-L-\ell}{L-\ell}\,.
\end{equation}

We'll call the sequence of Sobolev orthogonal polynomials for this distribution \emph{Gegenbaueur-Sobolev} polynomials.
Although in general Sobolev orthogonal polynomials don't enjoy a three-term recurrence as classical orthogonal polynomials do, for this class of polynomials it's possible to build a recurrence for $S_t$  involving only $S_{t-2}$, $Q_t$ and $Q_{t-2}$, where $\{Q_t\}$ is sequence of Gegenbaueur polynomials \citep{marcellan1994gegenbauer}.
Unfortunately, existing work on Gegenbaueur and Gegenbaueur-Sobolev polynomials considers the un-shifted distribution $\tilde \mu$, but not $\mu$, and also doesn't consider the residual normalization $Q_t(0)=1$ or $S_t(0)=1$. After (painful) changes to shift and normalize the polynomials, we obtain a three-stages algorithm, summarized in the next Theorem.

\begin{figure}
    \centering
    \includegraphics[width=.8\linewidth]{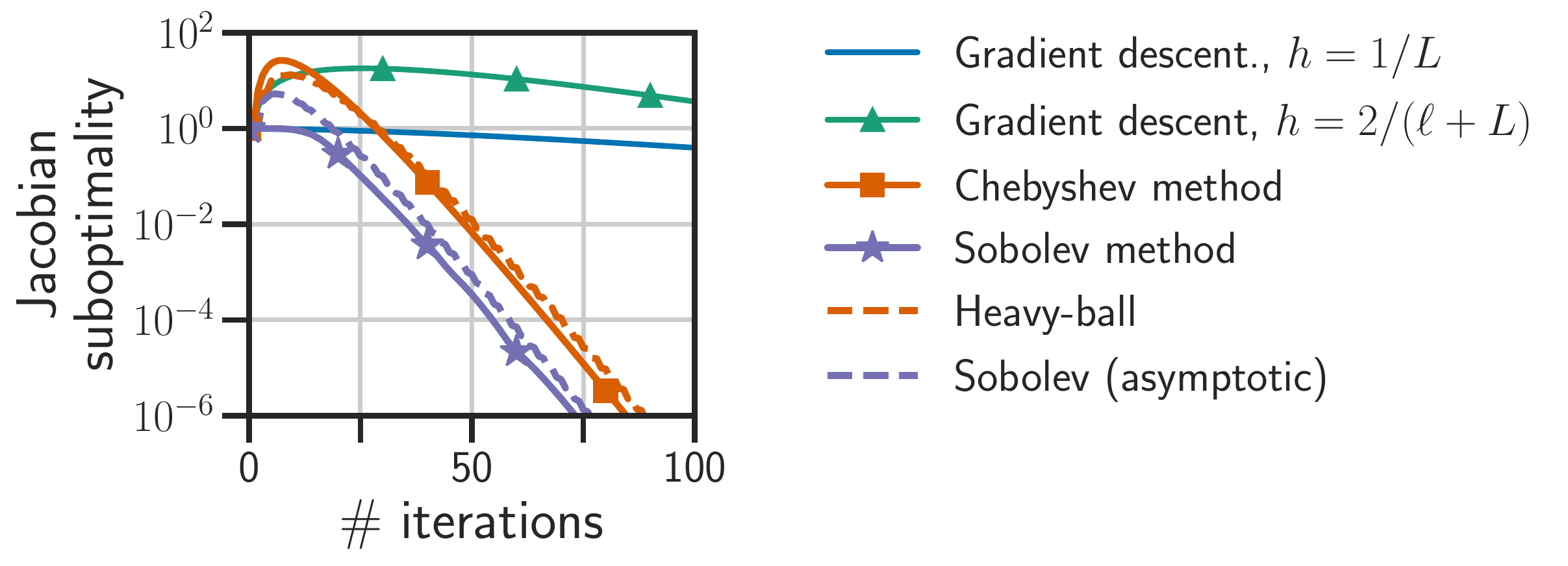}

    \caption{(Theoretical) \textbf{Worst-case convergence rates} for different algorithms, with $\ell = 0.5$, $L=10$, $\alpha = 1$ and $G = 0$. The upper bound of a method associated with the polynomials $\{P_t\}$ is defined as $\max_{\lambda\in[\ell,L]}|P_t(\lambda)|$, where $t$ is the iteration counter. The  plot compares Gradient descent with large ($\tfrac{2}{L + \mu}$) and small ($\tfrac{1}{L}$)   step size, Chebyshev, Sobolev (with $\eta = L/\ell$), and their asymptotic variants. We recognize in those curves the peaks of gradient descent and Chebyshev from Figure \ref{fig:phase_unrolling}.
    }
    \label{fig:upper_bound_algo}
\end{figure}

\begin{theorem}[Accelerated Unrolling] \label{thm:soboloev_method}
    Let $P_t^{\star}$ be defined in Proposition \ref{prop:optimal_sobolev}, where the Sobolev product is defined with the density function $\mu$ \eqref{eq:gegenbaueur_density}. Then, the optimization algorithm associated to $P_t^\star$ reads
    \begin{align}
        \yy_t & = \yy_{t-1} - h_t \nabla f(\yy_{t-1}, \ttheta) + m_t (\yy_{t-1}-\yy_{t-2}) \label{eq:gegenbaueur_algo} \\
        \zz_t & = c^{(1)}_t \zz_{t-2} + c^{(2)}_t \yy_t - c^{(3)}_t \yy_{t-2} \label{eq:sobolev_algo} \\
        \xx_t & = \frac{A_{t-1}}{A_t} \xx_{t-1} + \frac{a_t}{A_t} \zz_t\,, \label{eq:avg_sobolev_algo}
    \end{align}
    for some step size $h_t$, momentum $m_t$, parameters $c^{(1)}_t,\,c^{(2)}_t,\,c^{(3)}_t$ that depend on $\alpha$, $\ell$, $L$ and $\eta$, and $A_t,\, a_t$ are defined in Proposition \ref{prop:optimal_sobolev}, whose recurrence are detailed in Appendix \ref{sec:sequences_sobolev}. Moreover, when $t \rightarrow \infty$, the recurrence simplifies into
    \begin{align}
        \yy_t & = \yy_{t-1} + h \nabla f(\yy_{t-1}, \ttheta) + m (\yy_{t-1}-\yy_{t-2})\\
        \xx_t & =  \yy_{t} + m(\xx_{t-1}-\yy_{t-2})\,,
    \end{align}
    where $m = \left(\frac{1-\sqrt{\kappa}}{1+\sqrt{\kappa}}\right)^2$ and $h = \left(\frac{2}{\sqrt{\ell}+\sqrt{L}}\right)^2$ are the momentum and step size of Polyak's Heavy Ball. Moreover, as $t\rightarrow\infty$, we have the same asymptotic linear convergence as the Chebyshev method, $
        \lim\limits \sqrt[t]{\frac{\|\partial \xx_t(\ttheta) - \partial \xx_\star(\ttheta)\|_F}{\|\partial \xx_0(\ttheta) - \partial \xx_\star(\ttheta)\|_F}} \leq \sqrt{m}
    $.
\end{theorem}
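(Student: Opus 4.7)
The plan is to translate the polynomial identity $P_t^\star = \frac{1}{A_t}\sum_{i=0}^t a_i S_i$ from Proposition~\ref{prop:optimal_sobolev} into an iterate-level recurrence in three stages, one for each of the building blocks (Gegenbauer polynomials $Q_t$, Sobolev polynomials $S_t$, and the convex combination defining $P_t^\star$), and then to pass to the limit $t \to \infty$ to recover Polyak's heavy ball. Throughout, I use the standard identification: if $P$ is a residual polynomial and $\xx - \xx_\star = P(\HH)(\xx_0 - \xx_\star)$, then $\HH(\xx - \xx_\star) = \nabla f(\xx, \ttheta)$, so any polynomial recurrence of the form $P_t(\lambda) = (\alpha_t - \beta_t \lambda) P_{t-1}(\lambda) + \gamma_t P_{t-2}(\lambda)$ lifts to the iterate recurrence $\xx_t = \alpha_t \xx_{t-1} - \beta_t \nabla f(\xx_{t-1}, \ttheta) + \gamma_t \xx_{t-2}$, provided the coefficients sum to one so that the residual condition $P_t(0)=1$ is preserved.

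\textbf{Step 1: the $\yy_t$ stage.} I first recall the classical three-term recurrence for Gegenbauer polynomials on $[-1,1]$, and then push it through the affine map $m(\lambda) = (2\lambda - L - \ell)/(L-\ell)$ and through the normalization $Q_t(0)=1$ used in the paper. This gives $Q_t(\lambda) = (\bar\alpha_t - \bar\beta_t \lambda) Q_{t-1}(\lambda) - \bar\gamma_t Q_{t-2}(\lambda)$ with coefficients depending only on $\alpha,\ell,L$, which lifts to the momentum recurrence \eqref{eq:gegenbaueur_algo} and identifies $h_t$ and $m_t$.

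\textbf{Step 2: the $\zz_t$ stage.} Here I invoke the Marcellan--Perez--Ronveaux recurrence~\citep{marcellan1994gegenbauer}, which for the Gegenbaueur-Sobolev family expresses $S_t$ as a linear combination of $S_{t-2}$, $Q_t$, and $Q_{t-2}$. I translate this mixed recurrence in the same way, using the normalization $S_t(0)=1$, to obtain the update \eqref{eq:sobolev_algo}; the coefficients $c_t^{(1)}, c_t^{(2)}, c_t^{(3)}$ are obtained by (i) copying the Marcellan--Perez--Ronveaux coefficients, (ii) re-scaling by $1/S_t(0)$, and (iii) shifting to $[\ell,L]$. This is essentially bookkeeping but must be done carefully, since the existing references work with the un-shifted, un-normalized version.

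\textbf{Step 3: the $\xx_t$ stage.} From Proposition~\ref{prop:optimal_sobolev}, $P_t^\star - P_{t-1}^\star = \frac{a_t}{A_t}\bigl(S_t - P_{t-1}^\star\bigr) + \left(\frac{1}{A_t}-\frac{1}{A_{t-1}}\right)\sum_{i=0}^{t-1} a_i S_i$. Simplifying using $A_t = A_{t-1} + a_t$ yields $P_t^\star = \frac{A_{t-1}}{A_t} P_{t-1}^\star + \frac{a_t}{A_t} S_t$, and lifting produces exactly \eqref{eq:avg_sobolev_algo}. The three stages together implement $\xx_t - \xx_\star = P_t^\star(\HH)(\xx_0 - \xx_\star)$.

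\textbf{Step 4: asymptotic reduction to Heavy Ball and matching Chebyshev rate.} As $t \to \infty$, the Gegenbauer three-term coefficients converge to the limits of the Chebyshev recurrence on $[\ell,L]$, which are precisely Polyak's heavy ball parameters $h = (2/(\sqrt{\ell}+\sqrt{L}))^2$, $m = ((1-\sqrt{\kappa})/(1+\sqrt{\kappa}))^2$. I need to show that (a) the $\zz_t$ stage degenerates because $S_t - Q_t \to 0$ in the appropriate normalized sense (the Sobolev correction vanishes as the derivative term becomes negligible compared to the dominant $\lambda^t$ behavior), and (b) the weights $a_t/A_t \to 1$ so that the averaging \eqref{eq:avg_sobolev_algo} collapses onto the latest iterate up to a correction of the form $m(\xx_{t-1} - \yy_{t-2})$. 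Combined, these two limits give the announced two-line recurrence. For the rate, since $\sqrt{m}$ is exactly Chebyshev's asymptotic rate, and Proposition~\ref{prop:upper_bound_jacobian} bounds the Jacobian error by $\|P_t^\star\|_\eta$ which is $\leq \|Q_t\|_\eta$ whose leading-order decay is governed by the Chebyshev rate, I get $\limsup \sqrt[t]{\|\partial \xx_t - \partial \xx_\star\|_F/\|\partial \xx_0 - \partial \xx_\star\|_F} \leq \sqrt{m}$.

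\textbf{Main obstacle.} The algebra in Step~2 is the hard part: existing treatments of Gegenbauer--Sobolev polynomials work on $[-1,1]$ without the residual normalization $S_t(0)=1$, so I need to propagate the affine change of variable and the normalization through the mixed recurrence in a way that preserves the Sobolev orthogonality while producing coefficients that still sum to one when evaluated at $\lambda=0$. Step~4 is also delicate because showing that the $\zz_t$ correction is asymptotically negligible requires control on the ratio $\|S_t\|_\eta/\|Q_t\|_\eta$ and on the dominant-root behavior of the shifted Chebyshev recurrence; I would handle this by analyzing the characteristic roots of the limiting three-term recurrence on $[\ell,L]$ and invoking standard Poincaré-type results on linear recurrences with convergent coefficients.
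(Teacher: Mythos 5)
Your Steps 1--3 follow the same route as the paper: shift the Marcell\'an et al.\ recurrence from $[-1,1]$ to $[\ell,L]$, renormalize by the values at $m(0)$ to keep residual polynomials (this is where the paper's coefficients $c^{(1)}_t,c^{(2)}_t,c^{(3)}_t$ come from, as ratios of $\tilde S_t(m(0))$ and $\tilde G_t(m(0))$), and lift $P_t^\star=\frac{A_{t-1}}{A_t}P_{t-1}^\star+\frac{a_t}{A_t}S_t$ to \eqref{eq:avg_sobolev_algo}. That part is sound, modulo the bookkeeping you acknowledge (including a recurrence for $a_t=1/\|S_t\|_\eta^2$, which the paper obtains from Gegenbauer norm identities).

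The genuine gap is in Step 4: both limiting claims (a) and (b) are false, and under them the asymptotic algorithm would be plain heavy ball $\xx_t=\yy_t$, not the stated $\xx_t=\yy_t+m(\xx_{t-1}-\yy_{t-2})$, so your mechanism cannot produce the correction term you need. What actually happens is: (i) the coupling coefficient satisfies $d_t=O(1/t^2)\to 0$ precisely \emph{because} the derivative term in the Sobolev norm scales like $\eta t^2$ and \emph{dominates} (your parenthetical has this backwards), and killing $d_t$ does not give $S_t\approx Q_t$ but rather $S_t\to (Q_t-mQ_{t-2})/(1-m)$, so the $\yy_{t-2}$ term in \eqref{eq:sobolev_algo} survives with coefficient $-m/(1-m)$; (ii) since $a_t/a_{t-1}\to 1/m>1$, one gets $A_t\sim a_t/(1-m)$, hence $a_t/A_t\to 1-m$ and $A_{t-1}/A_t\to m$ --- not $a_t/A_t\to 1$ --- so the averaging stage does not collapse. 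The two non-vanishing limits combine, with the $(1-m)$ factors cancelling, into $P_t^\star\to Q_t+m(P_{t-1}^\star-Q_{t-2})$, which is exactly the announced two-line recurrence. For the rate, the paper simply uses $\|P_t^\star\|_\eta^2=1/A_t$ from Proposition \ref{prop:optimal_sobolev} together with $A_{t-1}/A_t\to m$; your alternative, bounding $\|P_t^\star\|_\eta$ by the norm of a feasible Gegenbauer (or Chebyshev) residual polynomial, is workable but needs the decay of that comparison norm made precise, and note that Proposition \ref{prop:upper_bound_jacobian} only controls the \emph{expected} (average-case) Jacobian error, a caveat worth stating.
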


The accelerated algorithm for unrolling is divided into three parts. First,~\eqref{eq:gegenbaueur_algo} corresponds to an algorithm whose associated polynomials are Gegenbaueur polynomials. This is expected, as \citet{pedregosa2020averagecase} identified that all average-case optimal methods take the form of gradient descent with momentum. Second,~\eqref{eq:sobolev_algo} builds the Sobolev polynomial that corresponds to a weighted average of $\yy_t$. Finally,~\eqref{eq:avg_sobolev_algo} is the weighted average of Sobolev polynomials that builds $P^\star$ in Proposition \ref{prop:upper_bound_jacobian}.

The non-asymptotic algorithm is rather complicated to implement; see Appendix \ref{sec:sequences_sobolev}. Moreover, it requires a bound on the spectrum of $\HH(\ttheta)$, namely $[\ell,\,L]$, and one also has to choose an associated expected spectral density $\mu(\lambda)$ (parametrized by $\alpha$) and the parameter $\eta$. Nevertheless, this is the method that achieves the best performance for problems that satisfy our assumptions.

Surprisingly, the \textit{asymptotic} version is extremely simple, as it corresponds to \textit{a weighed average of Heavy-Ball iterates}: the only required parameters are $\ell$ and $L$. It means that asymptotically, the algorithm is \textit{universally} optimal, i.e., it achieves the best performing rate as long as we can identify the bounds on the spectrum of $\HH(\ttheta)$.
Such universal properties have been identified previously in \citep{scieur2020universal}, who showed that all average-case optimal algorithms converge to Polyak's momentum independently of the expected spectral density $\mu$ (up to mild assumptions).
We have the same phenomenon here, but with the additional surprising (and counter-intuitive) feature that the asymptotic algorithm is also \textit{independent of $\eta$}.

\section{Experiments and Discussion} \label{sec:experiments}

\subsection{Experiments on least squares objective}

We compare multiple algorithms for estimating the Jacobian (\ref{eq:obj_fun}) of the solution of a ridge regression problem (Example \eqref{example:ridge}) for a fixed value of $\theta=10^{-3}$.
%
Figure~\ref{fig:intro_jac} shows the objective and Jacobian suboptimality on a ridge regression problem with the breast-cancer\footnote{\url{https://archive.ics.uci.edu/ml/datasets/Breast+Cancer+Wisconsin+(Diagnostic)}} as underlying dataset.
Figure~\ref{fig:empirical_comparison} shows the Jacobian suboptimality as a function of the number of iterations, on both the breast-cancer and bodyfat\footnote{\url{http://lib.stat.cmu.edu/datasets/}} dataset, and for a synthetic dataset (where $\HH(\ttheta)$ is generated as $\AA^\top\AA$, where each entry in $\AA$ is generated from a standard Gaussian distribution).
Appendix \ref{apx:experiments} contains further details and experiments on a logistic regression objective.

We observe the early suboptimality increase of Gradient descent and Chebyshev algorithm as predicted by Theorem \ref{thm:rate_gd} and Theorem \ref{thm:bound_cheby}. Compared to Figure \ref{fig:upper_bound_algo}, that showed the theoretical rates, we see that there's a remarkable agreement between theory and practice, as both the early increase, the asymptotic rate and the ordering of the methods matches the theoretical prediction. We also see that Sobolev is the best performing algorithm in practice, as it avoids the early increase while matching the accelerated asymptotic rate of Chebyshev. 

\begin{figure}[t]
    \centering
    \includegraphics[width=1\linewidth]{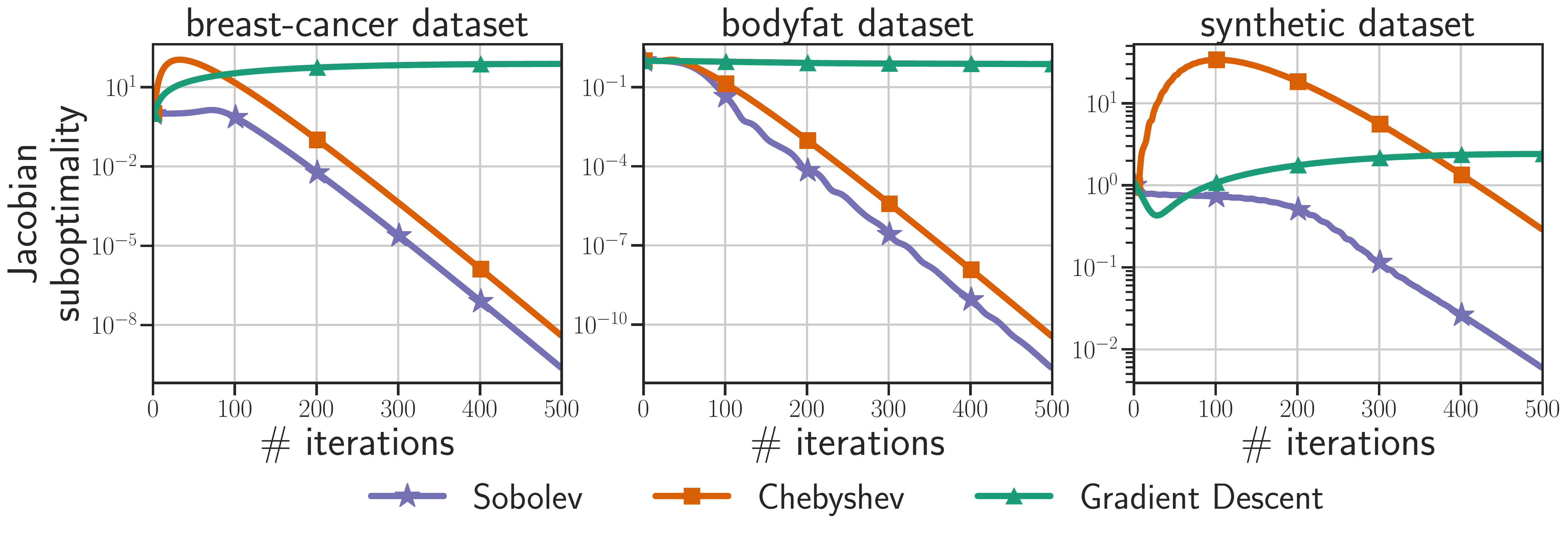}
    \caption{{\bfseries Empirical comparison} of the Sobolev method introduced in \S \ref{scs:sobolevalgo} (with $\alpha=1$ and $\eta=1$), the Chebyshev method and Gradient descent on 3 different datasets. The Sobolev algorithm has the shortest burn-in phase, does not locally diverge and has an accelerated asymptotic rate of convergence. }
    \label{fig:empirical_comparison}
\end{figure}


\subsection{Experiments on logistic regression objective}

In this section we provide some extra experiments on a non-quadratic objective. We choose the following regularized logistic regression objective
\begin{equation}
f(\xx, \theta) = \sum_{i=1}^n \varphi(\AA_i^\top \xx, \sign(\bb))
\end{equation}
where $\varphi$ is the binary logistic loss,  $\AA, \bb$ is the data, which we generated both from a synthetic dataset and the breast-cancer dataset as described in \S \ref{sec:experiments}.

\begin{figure}[h!t]
    \centering
    \includegraphics[width=0.79\linewidth]{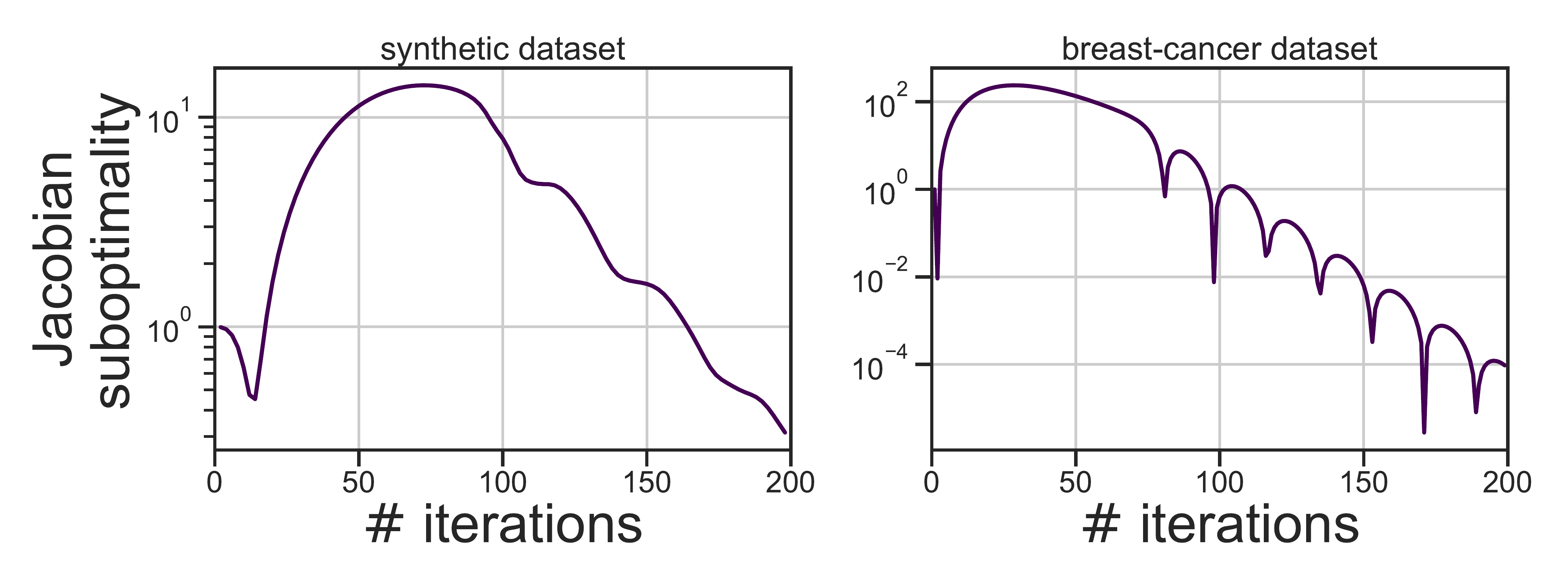}
    \caption{{\bfseries Two-phase dynamics in logistic regression.} The two-phase dynamics predicted by Corollary \ref{cor:worst_case_rates_gd} and Theorem \ref{thm:bound_cheby} empirically hold for a logistic regression objective. This objective not covered by our theory since it would violate the quadratic assumption (Assumption \ref{assump:quadratic}).}
    \label{fig:phase_unrolling_logistic}
\end{figure}

The only significant difference with the least squares loss is the range of step-size values that exhibit the initial burn-in phase. While for the quadratic loss, these are step-sizes close to $2/(L + \mu)$, in the case of logistic regression, L is a crude upper bound and so this step-size is not necessarily the one that achieves the fastest convergence rate. The featured two-phase curve was computed using the step-size with a fastest asymptotic rate, computed through a grid-search on the step-size values.

\textbf{Limitations.} Our theoretical results are limited to first-order methods applied to quadratic functions. Many applications use first-order methods, the quadratic Assumption \ref{assump:quadratic}, as well as the commutativity Assumption \ref{assump:commutative}, are somewhat restrictive. However, experiments on objectives violating the non-quadratic and non-commutative assumption (Appendix \ref{apx:experiments} and \ref{scs:commutatity}) show that the two-phase dynamics empirically translate to more general objectives.
The Sobolev algorithm developed in this paper, however, might not generalize well outside the scope of quadratics. Nevertheless, the development of this accelerated method for unrolling highlights that we can adapt the design of current optimization algorithms so that they might perform better for automatic differentiation.

\clearpage

\textbf{Acknowledgements.} The authors would like to thank Pierre Ablin, Riccardo Grazzi, Paul Vicol, Mathieu Blondel and the anonymous reviewers for feedback on this manuscript. 
QB would like to thank Samsung Electronics Co., Ldt. for funding this research. 


{
\small
\printbibliography
}

\clearpage

\appendix

\onecolumn

\section*{Appendices}

\section{On the Commutativity Assumption}\label{scs:commutatity}

We consider the problem
    \[
        f(\xx, \theta) = \tfrac{1}{2} \left(\|\AA \xx-\yy\|_2^2 +\theta \|\xx-\bar{\xx}\|_{\DD}^2\right) , \text{with } \|\xx\|_D^2 \defas \xx^\top \DD \dd\,,
    \]
which is a generalization of Example \ref{example:ridge} for the matrix norm $\|\xx\|_{\DD}^2$ with a diagonal matrix $\DD$. Contrary to Example \ref{example:ridge}, the matrix $\boldsymbol{D}$ is not an identity matrix, but instead a diagonal matrix where the diagonal entries are generated from a Chi-squared distribution. In this case, Assumption \ref{assump:commutative} is no longer verified.

To investigate whether the two phases dynamics appear also on this class of problems, we repeat the same experiment as in Figure \ref{fig:phase_unrolling} with the above objective. We plot the result here below, confirming the same dynamics of an initial Burn-in-Phase followed by a linear convergence phase observed in the initial experiment.

\begin{figure}[h!t]
    \centering
    \includegraphics[width=\linewidth]{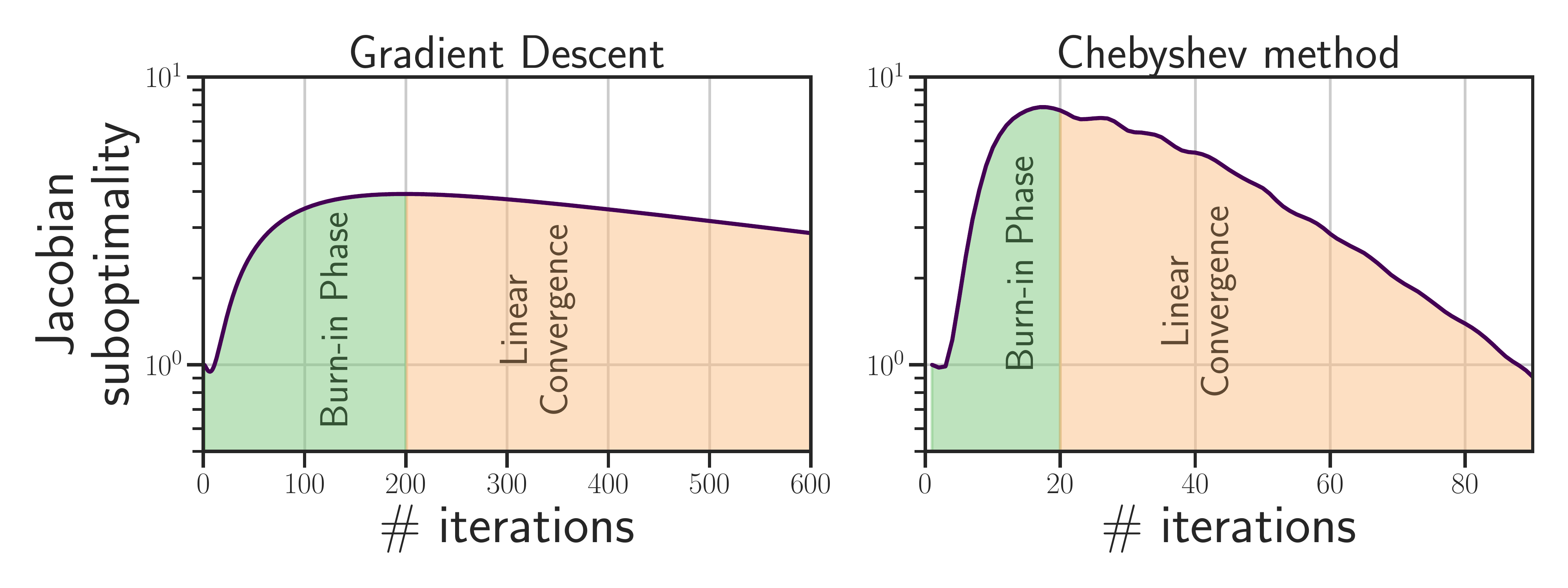}
    \caption{{\bfseries Two-phase dynamics without the commutativity assumption.} The two-phase dynamics predicted by Corollary \ref{cor:worst_case_rates_gd} and Theorem \ref{thm:bound_cheby} empirically hold for a problem that does not satisfy the commutativity assumption (Assumption \ref{assump:commutative}).}
\end{figure}

We also reproduced the same setup as in Figure \ref{fig:empirical_comparison} with this matrix norm, obtaining again comparable results as in the commutative case. This suggest that results regarding the two-phase dynamics could potentially be developed without Assumption \ref{assump:commutative}, as we observe similar results as in Figure \ref{fig:empirical_comparison}.

\begin{figure}[h!t]
    \centering
    \includegraphics[width=\linewidth]{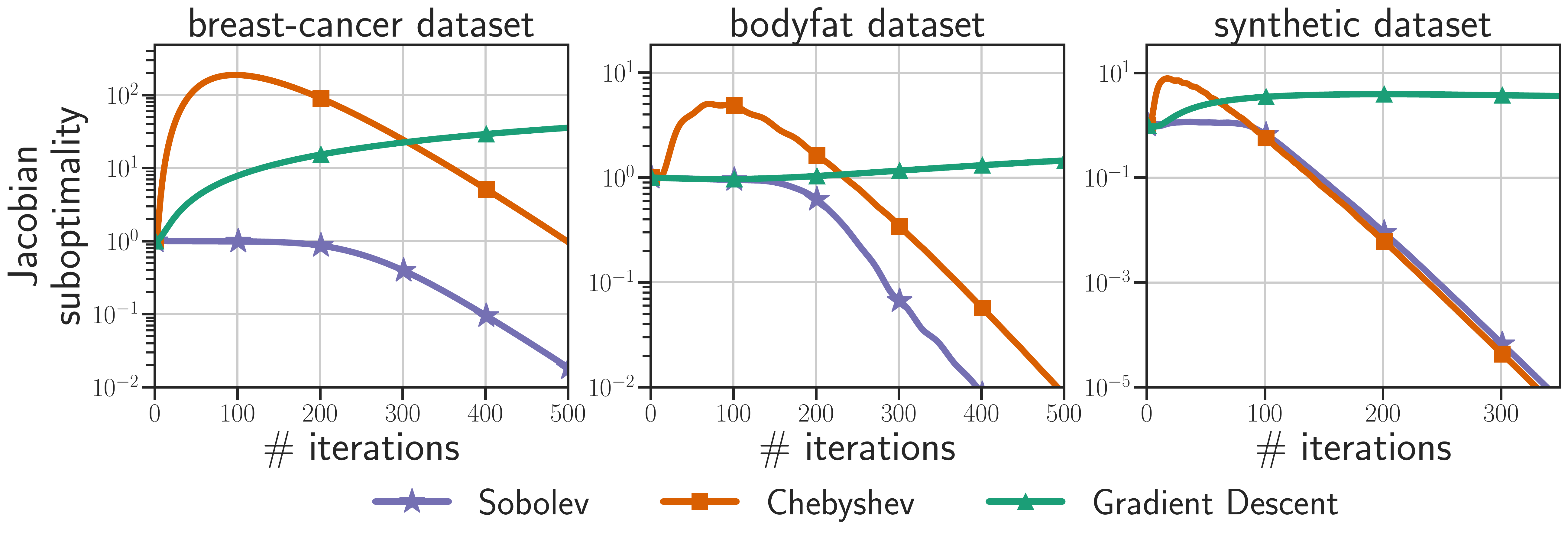}
\end{figure}

\section{Experiments}\label{apx:experiments}

\subsection{Further experimental details}
\begin{table*}[ht]
\label{tab:datasets}
\vskip 0.15in
\begin{center}
\begin{small}
\begin{sc}
\begin{tabular}{lccccc}
\toprule
Dataset & $n$ & $d$ & $\kappa$ &  &  \\
\midrule
\href{https://archive.ics.uci.edu/ml/datasets/Breast+Cancer+Wisconsin+(Diagnostic)}{Breast Cancer} & 683 & 10 & 7.2 $\times 10^{7}$ & &   \\
\href{http://lib.stat.cmu.edu/datasets/}{bodyfat} & 252 & 14 & 0.021 &  &  \\
Synthetic & 200 & 100 & 0.18 &  &  \\
\bottomrule
\end{tabular}
\end{sc}
\end{small}
\end{center}
\vskip -0.2in
\end{table*}

\paragraph{Hyperparameters.} Initialization is always zero, $\xx_0 = \boldsymbol{0}$, the regularization parameter $\theta$ in the ridge regression problem is always set to $\lambda= 10^{-3} \|\AA\|_2$.

\paragraph{Train-test split.} For every dataset, we only use the train set, where the split is given by the \texttt{libsvmtools}\footnote{\url{https://www.csie.ntu.edu.tw/~cjlin/libsvmtools/datasets/}} project.

\paragraph{Run-time.} Given the reduced size of these datasets, the script to compare all methods, which does a full unrolling for each iteration, runs in under 5 minutes running on CPU.

\section{Proofs}

\subsection{Proof of Theorem \ref{thm:master_identity}}
\masteridentity*
\begin{proof}
    We differentiate both sides of \eqref{eq:residual_polynomial} and use Assumption \ref{assump:commutative}:
    \[
        \partial \xx_t(\ttheta) - \partial \xx_\star(\ttheta) = {\color{purple}P_t(\HH(\ttheta))}(\partial \xx_0(\ttheta) - \partial \xx_\star(\ttheta)) +{\color{teal}P'(\HH(\ttheta))} \partial \HH(\ttheta) (\xx_0(\ttheta)-\xx_\star(\ttheta))\,.
    \]
    We now differentiate the equation $\bb(\ttheta) = \HH(\ttheta)\xx_\star(\ttheta)$ w.r.t. $\ttheta$,
    \[
        \partial \bb(\ttheta) = \partial \HH(\ttheta)\xx_\star(\ttheta) + \HH(\ttheta)\partial \xx_\star(\ttheta).
    \]
    We first substitute $\partial \HH(\ttheta)\xx_\star(\ttheta)$ by $\partial\bb(\ttheta) - \HH(\ttheta)\partial \xx_\star(\ttheta)$. After rearrangement, we finally get
    \begin{align*}
        \partial \xx_t(\ttheta) - \partial \xx_\star(\ttheta) & = ({\color{purple}P_t(\HH(\ttheta))}-{\color{teal}P'(\HH(\ttheta))}\HH(\ttheta))(\partial \xx_0(\ttheta) - \partial \xx_\star(\ttheta))\\
        & \qquad+ {\color{teal}P'(\HH(\ttheta))}){\color{brown}\left[\partial \HH(\ttheta)\xx_0(\ttheta) + \partial\bb(\ttheta)+\HH(\ttheta)\partial \xx_0(\ttheta)  \right]}
    \end{align*}
    It suffices to notice that the {\color{brown}terms inside the square brackets} are the cross-derivative of $f$:
    \[
        \partial_\ttheta\nabla f(\xx, \ttheta) = \HH(\ttheta)\partial\xx(\ttheta)+\partial\HH(\ttheta)\xx(\ttheta)+\partial \bb(\ttheta).
    \]
\end{proof}

\subsection{Proof of Theorem \ref{thm:rate_gd}}

\rategd*

\begin{proof}
    The result is a direct application of Theorem \ref{thm:master_identity} with the gradient descent polynomial,
    \[
        P_t(x) = (1-hx)^t,\quad P_t'(x) = -th(1-hx)^{t-1}.
    \]
    Hence,
    \begin{align*}
        & \|\partial \xx_t(\ttheta) - \partial \xx_\star(\ttheta)\|_F \\
        = & \|\big(P_t(\HH(\ttheta)) - {P_t'(\HH(\ttheta))}\HH(\ttheta)\big) (\partial \xx_0(\ttheta)-\partial \xx_\star(\ttheta)) + P_t'(\HH(\ttheta)) \partial_\ttheta\nabla f(\xx_0(\ttheta), \ttheta)\,\|_F\\
        & \leq \max_{\ell \textbf{I}\preceq \HH \preceq L\textbf{I}}\|\big(P_t(\HH) - P_t'(\HH)\HH\big) (\partial \xx_0(\ttheta)-\partial \xx_\star(\ttheta)) + P_t'(\HH) \partial_\ttheta\nabla f(\xx_0(\ttheta), \ttheta)\,\|_F\\
        & = \max_{\ell \textbf{I}\preceq \HH \preceq L\textbf{I}}\|P_{t-1}(\HH)\left(\big(1-h\HH + th\HH\big) (\partial \xx_0(\ttheta)-\partial \xx_\star(\ttheta)) + th \partial_\ttheta\nabla f(\xx_0(\ttheta), \ttheta)\right)\,\|_F.
    \end{align*}
    Hence, in the worst case, the vector are align with the largest eigenvalue of the polynomial. Therefore,
    \[
        \|\partial \xx_t(\ttheta) - \partial \xx_\star(\ttheta)\|_F \leq \max_{\ell \leq \lambda \leq L} (1-h\lambda)^{t-1} \Big((1-h\lambda^{t-1})\|\partial \xx_0(\ttheta)-\partial \xx_\star(\ttheta)\|_F + th G\Big).
    \]
\end{proof}

\subsection{Proof of Corollary \ref{cor:monotonicity}}

\monotonicity*
\begin{proof}
    In this proof, we assume that $t \geq 1$. Indeed, when $t=0$ and $t=1$, the worst-case bound do not guarantee any progress over $\|\partial \xx_1(\ttheta) - \partial \xx_\star(\ttheta)\|_F$.

    First, we notice that when $h\lambda \leq 1$ (i.e., $h\leq 1/L)$, we have that the rate from Theorem \ref{thm:rate_gd} is monotonically decreasing. Indeed, the derivative over $t$ gives
    \[
        (1 - h\lambda)^{t - 1} ((h\lambda (t - 1) + 1) \log(1 - h\lambda) + h\lambda).
    \]
    If the following condition is satisfied for all $t\geq 1$, the derivative is negative,  and therefore the bound is monotonically decreasing:
    \[
         \log(1 - h\lambda) \leq \frac{h\lambda }{(h\lambda (t - 1) + 1)}.
    \]
    This is always true since the right-hand side is negative, because $h\lambda <1$, and  the left-hand side is always positive since $t\geq 1$.

    We now assume that there exist some values of $\lambda$ such that $h\lambda >1$. For those values of $h\lambda$, the expression in Theorem \ref{thm:rate_gd} becomes
    \[
        \left( h\lambda-1 \right)^{t-1}\big\{ (1+(t-1)h\lambda)\|\partial \xx_0(\ttheta)-\partial \xx_\star(\ttheta)\|_F.
    \]
    We now compute its maximum value. First, we compute its derivative over $t$ and solve $\frac{\dif \cdot }{\dif t} = 0$. We obtain the unique solution
    \[
        t_\star = 1-\frac{1}{\log(h\lambda-1)} - \frac{1}{h\lambda}.
    \]
    This means there is only one maximum in the expression. We now seek a value of $h\lambda$ where the bound decrease monotonically for $t>1$, i.e.,
    \[
        \|\partial \xx_1(\ttheta) - \partial \xx_\star(\ttheta)\|_F > \|\partial \xx_2(\ttheta) - \partial \xx_\star(\ttheta)\|_F > \|\partial \xx_3(\ttheta) - \partial \xx_\star(\ttheta)\|_F > ...
    \]
    Since we know there is only one maximum, we compute $h\lambda$ such that, in the worst case, \mbox{$\|\partial \xx_1(\ttheta) - \partial \xx_\star(\ttheta)\|_F = \|\partial \xx_2(\ttheta) - \partial \xx_\star(\ttheta)\|_F$}. We therefore have to solve
    \[
        (h\lambda-1)(1+h\lambda) = 1 \quad \Rightarrow \quad h\lambda = \sqrt{2}.
    \]
    In particular, this means that if $h\lambda < \sqrt{2}$, the bound decreases monotonically for $t=1,\,2,\,\ldots$.

\end{proof}

\subsection{Proof of Theorem \ref{thm:bound_cheby}}

\boundcheby*

\begin{proof}
    First, we recall that the derivative of the Chebyshev polynomial of the first kind can be expressed as a function of the Chebyshev polynomial of the second kind (written $\tilde U_t$):
    \[
        \frac{\dif \tilde C_t(\lambda)}{\dif \lambda} = t\tilde U_{t-1}(\lambda).
    \]
    Therefore, we replace the polynomial $P$ in Theorem \ref{thm:master_identity} by $C_t$, and evaluate
    \[
        C_t(\lambda) - \lambda\frac{\dif C_t(\lambda)}{\dif \lambda}  = C_t(\lambda) - \lambda\frac{ m'(\lambda)\tilde C'_t(m(\lambda))}{\tilde C_t(m(0))}= C_t(\lambda) - \frac{2\lambda t\tilde U_{t-1}(m(\lambda))}{(L-\ell)\tilde C_t(m(0))}.
    \]
    This polynomial achieves its maximum in absolute value at the end of the interval $[\ell,L]$. Therefore, after replacement, and using the fact that $m(L) = 1$, $\tilde C(1)=1$, and $\tilde U_t(1) = t$, we obtain
    \[
        \left|\left[C_t(\lambda) - \frac{2\lambda t\tilde U_{t-1}(m(\lambda))}{(L-\ell)\tilde C_t(m(0))}\right]_{\lambda = L}\right| = \frac{1}{|\tilde C(m(0))|}\left|\frac{2 t^2}{1-\kappa}-1\right|.
    \]
    Similarly, for the second term, we have
    \[
        \max_{\lambda \in[\ell,L]} \frac{\dif C_t(\lambda)}{\dif \lambda} = \frac{1}{|\tilde C(m(0))|} \frac{2t^2}{1-\kappa}.
    \]
    It suffices now to evaluate $\frac{1}{|\tilde C(m(0))|}$. Using (for example) \citep[Theorem 2.1]{d2021acceleration}, we finally have
    \[
        \frac{1}{|\tilde C(m(0))|} = \frac{1}{\xi^t+\xi^{-t}},\qquad \xi = \frac{1-\sqrt{\kappa}}{1+\sqrt{\kappa}}.
    \]
\end{proof}

\subsection{Proof of Proposition \ref{prop:lower_bound_unrolling}}

\lowerboundunrolling*

\begin{proof}
    The proof is based on a reduction to the optimization case. Indeed, consider the specific case of ridge regression, with a free scaling parameter $\alpha>0$,
    \[
        f(\xx, \ttheta) = \frac{1}{2} \left(\|\AA\xx-\bb\|^2 + \alpha\ttheta \|\xx-\xx_0\|^2\right).
    \]
    In such a case, for all $\xx_0$, we have $\| \partial_\ttheta\nabla f(\xx_0(\ttheta), \ttheta) \|_F=0$. Moreover, this function is $[\sigma_{\min}^2(\AA)+\alpha\ttheta]$ strongly convex and $[\sigma_{\max}^2(\AA)+\alpha\ttheta] $-smooth, where $\sigma_{\min}$ and $\sigma_{\max}$ are respectively the smallest and largest singular value of a matrix. Let us write $\HH = \AA^\top\AA+\alpha\ttheta\II$ and $\xx_\star = \HH^{-1}(\ttheta)\AA^T b$.

    Now, consider any quadratic function $\tilde f$ of the form
    \[
        \tilde f = \frac{1}{2}(\xx-\tilde \xx_\star)\tilde \HH(\xx-\tilde \xx_\star).
    \]
    Using the notation $\bar \ttheta$ to be a \textit{fixed} value of theta $\ttheta$ (i.e., $\bar\ttheta = \ttheta$ but $\partial_{\ttheta} \bar\ttheta = 0$), it is possible to write $f$ such that it matches $\tilde f$, by setting
    \[
        \AA = (\tilde \HH-\alpha\bar \ttheta)^{\frac{1}{2}},\quad b = \AA(\AA^\top\AA)^{-1}(\AA^\top\AA + \alpha\bar\ttheta\II) \tilde\xx_\star.
    \]
    This is possible only if $\tilde \HH-\alpha\bar \ttheta \succ \textbf{0}$, or equivalently, if $ \ell>\alpha\bar\ttheta $. It suffices to set $\frac{\ell}{\bar\ttheta}>\alpha$ to ensure that condition. This means we can cast \textit{any} quadratic function that does not depends on $\ttheta$ into one that depends on $\ttheta$, such that $\| \partial_\ttheta\nabla f(\xx_0(\ttheta), \ttheta) \|_F=0$.

    In such a case, the master identity from Theorem \ref{thm:master_identity} reads
    \[
        \partial \xx_t(\ttheta) - \partial \xx_\star(\ttheta) = ({\color{purple}P_t(\HH(\ttheta))} - \HH(\ttheta){\color{teal}P_t'(\HH(\ttheta))}) (\partial \xx_0(\ttheta)-\partial \xx_\star(\ttheta)),
    \]
    where $\HH(\ttheta) = \AA^\top\AA+\ttheta \II$. Now, write $Q_t(\lambda) = P_t(\lambda)-\lambda P'_t(\lambda)$. We now have the following identity,
    \[
        \partial \xx_t(\ttheta) - \partial \xx_\star(\ttheta) = Q_t(\HH(\ttheta)) (\partial \xx_0(\ttheta)-\partial \xx_\star(\ttheta)).
    \]
    This identity is similar to the one we have in optimization:
    \[
         \xx_t - \xx_\star = P_t(\HH) (\xx_0- \xx_\star), \qquad P_t(0) = 1,
    \]
    and for this identity, we have the lower bound \citep[Proposition 12.3.2]{nemirovski1995information}
    \[
        \|\xx_t - \xx_\star\|_F \geq \frac{2}{\xi^t+\xi^{-t}}\| \xx_0 - \xx_\star\|_F.
    \]
    However, in the case of unrolling, we have different constraints on $Q_t$, which are the following:
    \[
        Q_t(0) = P_t(0)-0\cdot P'_t(0) = 1, \qquad Q'_t(0) = P'_t(0) - P'_t(0) - 0\cdot P''(0) = 0.
    \]
    Therefore, we have \textit{more} constraints on $Q$ (i.e., on how fast we can decrease the accuracy bound). Since we have seen that the functional class we work on is at least as large as the one of quadratic optimization, the lower bound can only be worse than the one for minimizing quadratic function with a bounded spectrum.
\end{proof}

\subsection{Proof of Proposition \ref{prop:upper_bound_jacobian}}

\upperboundjacobian*
\begin{proof}
We first derive both sides of \eqref{eq:residual_polynomial} and use Assumption \ref{assump:commutative}, then we use Cauchy-Schwartz and $(a+b)^2\leq 2a^2+2b^2$:
\begin{align*}
    & \|\partial \xx_t(\ttheta) - \xx_\star(\ttheta)\|_F^2, \\
    = & \|P_t(\HH(\ttheta))(\partial \xx_0(\ttheta) - \partial \xx_\star(\ttheta)) + P'(\HH(\ttheta))\partial \HH(\ttheta)(\xx_0(\ttheta)-\xx_\star(\ttheta))\|^2, \\
    \leq & \Big( \|P_t(\HH(\ttheta))(\partial \xx_0(\ttheta) - \partial \xx_\star(\ttheta))\|_F + \|P'(\HH(\ttheta))\partial \HH(\ttheta)(\xx_0(\ttheta)-\xx_\star(\ttheta))\|_F\Big)^2, \\
    \leq & 2\|P_t(\HH(\ttheta))(\partial \xx_0(\ttheta) - \partial \xx_\star(\ttheta))\|_F + 2\|P'(\HH(\ttheta))\partial \HH(\ttheta)(\xx_0(\ttheta)-\xx_\star(\ttheta))\|_F^2, \\
    \leq & 2\|P_t(\HH(\ttheta))\|_F^2 \|\partial \xx_0(\ttheta) - \partial \xx_\star(\ttheta))\|_F^2+ 2\|P'(\HH(\ttheta))\|_F^2 \|\partial \HH(\ttheta)(\xx_0(\ttheta)-\xx_\star(\ttheta))\|_F^2, \\
    \leq & 2\left(\|P_t(\HH(\ttheta))\|_F^2 + \eta\|P'(\HH(\ttheta))\|_F^2 \right)\|\partial \xx_0(\ttheta) - \partial \xx_\star(\ttheta))\|_F^2. \\
    = & 2\left(\textbf{Trace}(P_t(\HH(\ttheta))^2) + \eta\textbf{Trace}(P'(\HH(\ttheta))^2) \right)\|\partial \xx_0(\ttheta) - \partial \xx_\star(\ttheta))\|_F^2.
\end{align*}
    Since the trace of a symmetric matrix is the sum of its eigenvalues, after taking the expectation on both sides, we obtain the desired result:
    \[
        \mathbb{E}\left[\|\partial \xx_t(\ttheta) - \xx_\star(\ttheta)\|_F^2\right] \leq 2\left( \int_{\mathbb{R}} P_t^2 \dif\mu  + \eta\int_{\mathbb{R}} (P'_t)^2 \dif\mu \right)\|\partial \xx_0(\ttheta) - \partial \xx_\star(\ttheta))\|_F^2,
    \]
\end{proof}

\subsection{Proof of Proposition \ref{prop:optimal_sobolev}}

\optimalsobolev*
\begin{proof}
    We have that the sequence $\{S_i\}_{i=0 \ldots t}$ is a orthogonal basis for $\mathcal{P}_t$. Therefore, we can write \textit{any} polynomials as a weighted sum of $S_i$. Also, since $P_t(0)=1$ and $S_i(0)=1$, we have to enforce that the linear combination sums to one. This means that
    \[
        P_t = \sum_{i=0}^t  a_i S_i,\qquad \sum_{i=0}^t  a_i = 1\,.
    \]
    We now minimize over $\alpha$.
    \begin{align*}
        \min_{P\in\mathcal{P}_t:P(0)=1} \langle P,P \rangle_{\eta} & = \min_{\alpha:\sum_{i=0}^t a_i=1} \langle \sum_{i=0}^t  a_i S_i,\sum_{i=0}^t  a_i S_i \rangle_{\eta}\\
        & = \min_{\alpha:\sum_{i=0}^t a_i=1}  \sum_{i=0}^t  a_i^2 \langle  S_i, S_i \rangle_{\eta} + \sum_{i=0}^t \sum_{j=0\neq i}^t  a_i\alpha_j \underbrace{\langle   S_i,S_j \rangle_{\eta}}_{=0}\\
        & = \min_{\alpha:\sum_{i=0}^t a_i=1}  \sum_{i=0}^t  a_i^2 \|S_i\|^2_{\eta}\,.
    \end{align*}
    The Lagrangian of the optimization problem reads
    \[
        \mathcal{L}(\alpha,\lambda) = \sum_{i=0}^t  a_i^2 \|S_i\|^2_{\eta} + \lambda (1-\sum_{i=0}^t a_i).
    \]
    Taking its derivative to zero gives the desired result:
    \[
        2 a_i \|S_i\|^2_{\eta}-\lambda =0 \quad \Rightarrow \quad  a_i = \frac{\lambda}{2\|S_i\|^2_{\eta}},\qquad \lambda = \frac{1}{\sum_{i=0}^t a_i}\,.
    \]
    Injecting the optimal solution into $\|P\|_{\eta}^2$ gives
    \begin{align*}
        \|P\|_{\eta}^2 & = \sum_{i=0}^t  a_i^2 \|S_i\|^2_{\eta}\\
        & = \left(\frac{1}{\sum_{i=0}^t\frac{1}{\|S_i\|^2_{\eta}}}\right)^2\sum_{i=0}^t \frac{1}{\|S_i\|^4_S} \|S_i\|^2_{\eta}\\
        & = \left(\frac{1}{\sum_{i=0}^t\frac{1}{\|S_i\|^2_{\eta}}}\right)^2\sum_{i=0}^t \frac{1}{\|S_i\|^2_{\eta}}\\
        & = \frac{1}{\sum_{i=0}^t\frac{1}{\|S_i\|^2_{\eta}}} = \frac{1}{\sum_{i=0}^t a_i}\,.
    \end{align*}
\end{proof}

\clearpage

\section{Optimal Sobolev algorithm} \label{sec:sequences_sobolev}

We recall the Sobolev algorithm:
\begin{align*}
    \yy_t & = \yy_{t-1} - h_t \nabla f(\yy_{t-1}) + m_t (\yy_{t-1}-\yy_{t-2}) \\
    \zz_t & = c^{(1)}_t \zz_{t-2} + c^{(2)}_t \yy_t - c^{(3)}_t \yy_{t-2} \\
    \xx_t & = \frac{A_{t-1}}{A_t} \xx_{t-1} + \frac{a_t}{A_t} \zz_t,
\end{align*}
parametrized by:
\begin{itemize}
    \item $[\ell, L]$, lower and upper bound on the eigenvalues of $\HH(\ttheta)$,
    \item $\alpha$, parameter of the Gegenbaueur distribution \eqref{eq:gegenbaueur_density}, supposed to be the expected spectral density \eqref{eq:def_expected_spectral_density}. \textbf{Note:} $\alpha = 0$ leads to a sequence of Chebyshev polynomials for $y_t$.
    \item $\eta$, assumed to satisfy the inequality $\|\partial\HH(\ttheta) (\xx_0(\ttheta) - \xx_\star(\ttheta))\|_F\leq \eta \|\partial \xx_0(\ttheta) - \partial \xx_\star(\ttheta)\|_F$. Intuitively, this parameter is the balance between $\|P\|$ and $\|P'\|$.
\end{itemize}

\subsection{Initialization (required for \texorpdfstring{$t=0$}{t=0} and \texorpdfstring{$t=1$}{t=1})}

\subsubsection{Side parameters}
\begin{align*}
    y_0 & = z_0 = x_0 \\
    \delta_1 & = -\frac{L-\ell}{L+\ell} \\
    \kappa_1 & = 1 \\
    \kappa_2 & = 1 \\
    d_0 & = \xi_0\\
    d_1 & =\frac{3}{2(\alpha+2)(\alpha+1)(1+2\eta(\alpha+1))},\\
    d_2 & = \frac{3}{(\alpha+3)(\alpha+2)\left(1+\eta \frac{8(\alpha+2)(\alpha+1)}{2\alpha+1}\right)},\\
\end{align*}

\subsubsection{Main parameters}
\begin{align*}
    h_1 & = - \frac{2\delta_1}{L-\ell}\\
    m_1 & = -\left(1+\delta_1 \frac{L+\ell}{L-\ell}\right)\\
    c^{(1)}_1 & = 0\\
    c^{(2)}_1 & = 1\\
    c^{(3)}_1 & = 0\\
    a_1 & = \frac{d_1}{\xi_1K_1}\left( \frac{L+\ell}{L-\ell} \right)^2,\\
    A_1 & = A_{0} + a_1\\
\end{align*}

\subsection{Recurrence (for \texorpdfstring{$t\geq 2$}{t>=2})}

\subsubsection{Side parameters}
\begin{align*}
    \gamma_t & = \frac{t(t+2\alpha-1)}{4(t+\alpha)(t+\alpha+1)}\\
    \delta_t & = \frac{1}{-\frac{L+\ell}{L-\ell} + \delta_{t-1}\gamma_t}\\
    \xi_{t} & = \frac{(t+2)(t+1)}{4(t+\alpha+1)(t+\alpha)}\\
    d_t & = \frac{\xi_t\gamma_t\gamma_{t-1}}{\gamma_{t-1}(\eta t^2+\gamma_{t})+\xi_{t-2}(\xi_{t-2}-d_{t-2})}\\
    \Delta^P_t & = \frac{1+\delta_t \frac{L+\ell}{L-\ell}}{\gamma_t},\\
    \kappa_t & = \frac{1}{1+\left(\frac{d_{t-2}}{\kappa_{t-2}}-\xi_{t-2}\right)\Delta^P_t},\\
    \tau_t   & = \frac{1}{\frac{d_{t-2}}{\kappa_{t-2}} + \frac{1}{\Delta^P_t} - \xi_{t-2}},\\
    \Delta^S_t & = \frac{1}{d_{t-2} + \left( \frac{1}{\Delta^P_t} - \xi_{t-2} \right)\kappa_{t-2}}\\
    K_t & = \frac{t(t-1+2\alpha)}{4(t+\alpha-1)(t+\alpha)},\\
\end{align*}

\subsubsection{Main parameters}

\begin{align*}
    h_t & = - \frac{2\delta_t}{L-\ell}\\
    m_t & = -\left(1+\delta_t \frac{L+\ell}{L-\ell}\right)\\
    c^{(1)}_t & = d_{t-2} \Delta_t^S \\
    c^{(2)}_t & = \kappa_t \\
    c^{(3)}_t & = - \tau_t \xi_{t-2} \\
    a_t & = \frac{d_t \xi_{t-2}}{\xi_i d_{t-2} K_{t}K_{t-1} \Delta_i^2}  a_{t-2},\\
    A_t & = A_{t-1} + a_t\\
\end{align*}

\clearpage
\section{Derivation of the Sobolev algorithm}

\subsection{Notations}

In this section, we use the following notations. We denote by $\mu$ the Gegenbaueur density \ref{eq:gegenbaueur_density} defined in $[\ell, L]$, $\tilde \mu$ the Gegenbaueur density defined in $[-1,1]$:
\begin{equation*}
    \mu(\lambda) = \tilde \mu(m(\lambda)), \quad \tilde \mu(x) = (1-x^2)^{\alpha-\frac{1}{2}}\quad \text{and} \quad  m : [\ell,L]\rightarrow [0,1],\, m(\lambda) = \frac{2\lambda-L-\ell}{L-\ell}\,.
\end{equation*}
where
\begin{equation}
    m(\lambda) = \underbrace{\frac{2}{L-\ell}}_{=\sigma_1}\lambda +  \underbrace{\left(-\frac{L+\ell}{L-\ell}\right)}_{=\sigma_0} \label{def:linear_mapping}
\end{equation}
We also denote by $G_t$ and $\tilde G_t$ the sequence of Gegenbaueur polynomials that are orthogonal respectively w.r.t. the measure $\mu$ and $\tilde \mu$, that it, for all $i,j\geq 0$, we have
\[
    \int_{\mu}^L G_i(\lambda)G_j(\lambda) \dif \mu (\lambda)
    \begin{cases}
    > 0 \quad \text{if } i=j\\
    =0\quad \text{otherwise}
    \end{cases} \qquad \int_{-1}^1 \tilde G_i(x)\tilde G_j(x) \dif \mu(x)
    \begin{cases}
    > 0 \quad \text{if } i=j\\
    =0\quad \text{otherwise}
    \end{cases}
\]
In terms of normalization, we have that $G_t$ is a \textit{residual} polynomial, and $\tilde G_t$ is a \textit{monic} polynomials. In other terms,
\[
    G(\lambda) = \textcolor{red}{\textbf{1}} + ...\lambda^1 + \ldots + ...\lambda^t,\qquad \tilde G(\lambda) =  ...x^0 + ...x^1 + \ldots + \textcolor{red}{\textbf{1}}x^t
\]
In such a case, by using the linear mapping $m(\lambda)$ from $[\ell,L]$ to $[-1,1]$, see \eqref{def:linear_mapping}, we have the following relation:
\begin{equation} \label{eq:equivalence_gegenbaueur}
    G_t(\lambda) = \frac{\tilde G_t(m(\lambda))}{\tilde G_t(m(0))}.
\end{equation}
Similarly, we define $S_t$ and $\tilde S_t$ the sequence of orthogonal Sobolev polynomials w.r.t. the Sobolev product involving the Gegenbaueur density, i.e.,
\[
    \int_{\mu}^L S_i(\lambda)S_j(\lambda)  \dif \mu (\lambda) + \eta \int_{\mu}^L S'_i(\lambda)S'_j(\lambda) \dif \mu (\lambda)
    \begin{cases}
    > 0 \quad \text{if } i=j\\
    =0\quad \text{otherwise}
    \end{cases} ,
\]
and
\[
    \int_{-1}^1 \tilde S_i(x)\tilde S_j(x) \dif \mu(x)  + \tilde \eta \int_{-1}^1 \tilde S'_i(x)\tilde S'_j(x) \dif \mu(x)
    \begin{cases}
    > 0 \quad \text{if } i=j\\
    =0\quad \text{otherwise}
    \end{cases}.
\]
Originally, they are called Gegenbaueur-Sobolev polynomials \citep{marcellan1994gegenbauer} because $\mu$ is a Gegenbaueur density, but for conciseness, we simply call them Sobolev polynomials. As for the Gegenbaueur polynomials, $S_t$ is a \textit{residual} polynomial while $\tilde S_t$ is a \textit{monic} polynomial. Finally, we have that
\begin{equation}  \label{eq:equivalence_sobolev}
    S_t(\lambda) = \frac{\tilde S_t(m(\lambda))}{\tilde S_t(m(0))} \quad \text{if and only if} \quad \tilde \eta = \sigma_1^2\eta .
\end{equation}

Note that we make a distinction between plain symbols and tilde $\tilde{~}$ symbols, where the tilde $\tilde{~}$ notation is used for polynomials that are defined on $[-1,1]$, while the plain notation is the counterpart defined on $[\ell,L]$.

\subsection{Monic Sobolev polynomial}

We now describe the construction of $\tilde S$, detailed in \citep{marcellan1994gegenbauer}. The monic Gegenbaueur polynomial is constructed as
\begin{equation}\label{eq:recurence_monic_gegenbaueur}
    \tilde G_{0} = 1, \quad \tilde G_1 = x, \quad \tilde G_{t+1}(x) = x\tilde G_t(x) - \gamma_t \tilde G_{t-1}(x), \quad \gamma_t = \frac{t(t+2\alpha+1)}{4(t+\alpha)(t+\alpha-1)}.
\end{equation}
Then, the Sobolev polynomials are defined as a simple recurrence involving $\tilde G_t$ and $\tilde G_{t-2}$,
\begin{equation}\label{eq:recurence_monic_sobolev}
    \tilde S_0 = \tilde G_0, \quad \tilde S_1 = \tilde G_1,\quad \tilde S_t = d_{t-2}\tilde S_{t-2} + \tilde G_t - \xi_{t-2}\tilde G_{t-2},
\end{equation}
where
\begin{align}
    \xi_{t} & = \frac{(t+2)(t+1)}{4(t+\alpha+1)(t+\alpha)}, \nonumber\\
    d_0 & = \xi_0, \nonumber\\
    d_1 & =\frac{3}{2(\alpha+2)(\alpha+1)(1+2\eta(\alpha+1))}, \label{eq:recurence_sobolev_parameter}\\
    d_2 & = \frac{3}{(\alpha+3)(\alpha+2)\left(1+\eta \frac{8(\alpha+2)(\alpha+1)}{2\alpha+1}\right)}, \nonumber\\
    d_t & = \frac{\xi_t\gamma_t\gamma_{t-1}}{\gamma_{t-1}(\eta t^2+\gamma_{t})+\xi_{t-2}(\xi_{t-2}-d_{t-2})}. \nonumber
\end{align}
Note that the following property will be important later:
\begin{equation}\label{eq:property_dt}
    d_t = \xi_t \frac{\|\tilde G_t\|}{ \|\tilde S_t\|_{\tilde \eta}},
\end{equation}
where
\begin{align*}
    \|\tilde G_t \|^2 = \int_{-1}^1 \tilde G_t^2(x) \dif \mu(x), \qquad
    \|\tilde S_t\|_{\tilde \eta} = \int_{-1}^1 \tilde S_t^2(x) \dif \mu(x)  + \tilde \eta \int_{-1}^1 [\tilde S'_t(x)]^2 \dif \mu(x).
\end{align*}

\subsection{Shifted, normalized Sobolev polynomials}

We now shift and normalize the Sobolev polynomials, that it, instead of being defined in $[0,1]$ and being monic, we make them defined in $[\ell,L]$ (evaluate the polynomial at $x = m(\lambda)$) and residual (divide the polynomial by $\tilde S_t(m(0))$).

We begin by doing it to the Gegenbaueur polynomials. By applying the technique from \citep[Proposition 18]{pedregosa2020averagecase} on the polynomial $\tilde G_t(m(\lambda))$,
\[
    \tilde G_t(m(\lambda)) = \sigma_0\tilde G_{t-1}(m(\lambda)) + \sigma_1 \lambda \tilde G_{t-1}(m(\lambda))-\gamma_{t-1}\tilde G_{t-2}(m(\lambda)).
\]
We obtain the recurrence
\begin{equation} \label{eq:recurence_shifted_gegen}
    G_t(m(\lambda)) = \sigma_0\delta_t G_{t-1}(m(\lambda)) + \sigma_1\delta_t \lambda  G_{t-1}(m(\lambda))+(1-\sigma_0\delta_t)\tilde G_{t-2}(m(\lambda)),
\end{equation}
where
\begin{equation} \label{eq:def_delta}
    \delta_t = \frac{\tilde G_{t-1}(m(0))}{\tilde G_t(m(0))} = \frac{1}{\sigma_0-\delta_{t-1}\gamma_{t-1}}.
\end{equation}
This expression can be cast into a recurrence that involves a step size and a momentum,
\[
    G_{t}(\lambda) = G_{t-1} - h_t \lambda G_{t-1}(\lambda) + m_t (G_{t-1}(\lambda)-G_{t-2}(\lambda)),
\]
where
\begin{align*}
    \delta_1 & = -\frac{L-\ell}{L+\ell}, \\
    h_1 & = - \frac{2\delta_1}{L-\ell},\\
    m_1 & = -\left(1+\delta_1 \frac{L+\ell}{L-\ell}\right),\\
    \delta_t & = \frac{1}{-\frac{L+\ell}{L-\ell} + \delta_{t-1}\gamma_{t-1}},\\
    h_t & = - \frac{2\delta_t}{L-\ell},\\
    m_t & = -\left(1+\delta_t \frac{L+\ell}{L-\ell}\right).
\end{align*}

We now show how to shift and normalize the Sobolev polynomial. The shifting operation is not complicated, as it suffice to evaluate the polynomial $\tilde S_t$ at $x=m(\lambda)$. The difficult part is the normalization. Using the relations \eqref{eq:equivalence_gegenbaueur}, \eqref{eq:equivalence_sobolev} and \eqref{eq:recurence_monic_sobolev}, we obtain
\[
    S_t = \underbrace{\frac{\tilde S_{t-2}(m(0))}{\tilde S_t(m(0))}d_{t-2}}_{=c_t^{(1)}} S_{t-2} + \underbrace{\frac{\tilde G_{t}(m(0))}{\tilde S_t(m(0))}}_{=c_t^{(2)}} G_t \underbrace{- \frac{\tilde G_{t-2}(m(0))}{\tilde S_t(m(0))}}_{=c_t^{(3)}} G_{t-2}.
\]

Therefore, we have to compute those quantities that involves ratio of polynomials evaluated at $\lambda =0$, whose recurrence is detailed in the next Proposition.
\begin{proposition}
Let
\[
    \Delta^P_t = \frac{\tilde G_{t-2}(m(0))}{\tilde G_{t}(m(0))}, \qquad \Delta^S_t = \frac{\tilde S_{t-2}(m(0))}{\tilde S_{t}(m(0))} \qquad \kappa_t = \frac{\tilde G_t(m(0))}{\tilde S_t(m(0))},\qquad \tau_t = \frac{\tilde G_{t-2}(m(0))}{\tilde S_t(m(0))}.
\]
Then,
\begin{align}
    \Delta^P_t & = \delta_t\delta_{t-1} = \frac{\sigma_0\delta_t - 1}{\gamma_{t-1}},\\
    \kappa_t & = \frac{1}{1+\left(\frac{d_{t-2}}{\kappa_{t-2}}-\xi_{t-2}\right)\Delta^P_t},\\
    \tau_t   & = \frac{1}{\frac{d_{t-2}}{\kappa_{t-2}} + \frac{1}{\Delta^P_t} - \xi_{t-2}},\\
    \Delta^S_t & = \frac{1}{d_{t-2} + \left( \frac{1}{\Delta^P_t} - \xi_{t-2} \right)\kappa_{t-2}}
\end{align}
\end{proposition}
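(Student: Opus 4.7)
The plan is to deduce all four identities from two ingredients only: the three-term recurrence \eqref{eq:def_delta} for $\delta_t$, and the Sobolev recurrence \eqref{eq:recurence_monic_sobolev} evaluated at the single point $x = m(0)$. No analytic information about $\mu$ is required, and the whole argument is pure manipulation of four scalar ratios.

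First I would address $\Delta^P_t$. Telescoping the ratio gives
\[
    \Delta^P_t \;=\; \frac{\tilde G_{t-2}(m(0))}{\tilde G_t(m(0))} \;=\; \frac{\tilde G_{t-2}(m(0))}{\tilde G_{t-1}(m(0))}\cdot\frac{\tilde G_{t-1}(m(0))}{\tilde G_t(m(0))} \;=\; \delta_{t-1}\,\delta_t,
\]
and the alternative form $\Delta^P_t = (\sigma_0\delta_t-1)/\gamma_{t-1}$ comes from inverting \eqref{eq:def_delta}, i.e. writing $1/\delta_t = \sigma_0 - \gamma_{t-1}\delta_{t-1}$, solving for $\delta_{t-1}$, and multiplying the resulting expression by $\delta_t$.

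The remaining three identities for $\kappa_t$, $\tau_t$ and $\Delta^S_t$ all come from evaluating \eqref{eq:recurence_monic_sobolev} at $x = m(0)$,
\[
    \tilde S_t(m(0)) \;=\; d_{t-2}\,\tilde S_{t-2}(m(0)) \;+\; \tilde G_t(m(0)) \;-\; \xi_{t-2}\,\tilde G_{t-2}(m(0)),
\]
and then dividing both sides by three different normalizers. Dividing by $\tilde G_t(m(0))$ puts $1/\kappa_t$ on the left, while the cross term $\tilde S_{t-2}/\tilde G_t$ on the right is rewritten via $(\tilde S_{t-2}/\tilde G_{t-2})(\tilde G_{t-2}/\tilde G_t) = \Delta^P_t/\kappa_{t-2}$, yielding the recurrence for $\kappa_t$. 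Dividing by $\tilde G_{t-2}(m(0))$ instead gives $1/\tau_t$ on the left and, using $\tilde G_t/\tilde G_{t-2} = 1/\Delta^P_t$ together with $\tilde S_{t-2}/\tilde G_{t-2} = 1/\kappa_{t-2}$, the claimed formula for $\tau_t$. Finally, dividing by $\tilde S_{t-2}(m(0))$ gives $1/\Delta^S_t$; the only new ingredient here is $\tilde G_t/\tilde S_{t-2} = (\tilde G_t/\tilde G_{t-2})(\tilde G_{t-2}/\tilde S_{t-2}) = \kappa_{t-2}/\Delta^P_t$, which delivers the recurrence for $\Delta^S_t$.

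There is no real obstacle — the entire proposition is bookkeeping of four scalar ratios evaluated at a single point. The only care needed is to orient each quotient so that every appearing factor can be rewritten using the previously defined quantities $\delta_t$, $\kappa_{t-2}$, $d_{t-2}$, $\xi_{t-2}$ and $\Delta^P_t$, with no stray new symbols introduced. Once the three normalizations above are carried out and the elementary chain-rule style factorizations for the cross terms are applied, the four identities drop out immediately.
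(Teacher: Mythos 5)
Your proposal is correct and follows essentially the same route as the paper: evaluate the Sobolev recurrence \eqref{eq:recurence_monic_sobolev} at $x=m(0)$ and divide by $\tilde G_t(m(0))$, $\tilde G_{t-2}(m(0))$ and $\tilde S_{t-2}(m(0))$ respectively, rewriting the cross ratios as products of $\Delta^P_t$, $\kappa_{t-2}$, $d_{t-2}$, $\xi_{t-2}$. The only cosmetic difference is in $\Delta^P_t$, where you telescope $\delta_t\delta_{t-1}$ and substitute \eqref{eq:def_delta}, whereas the paper matches the coefficient of $G_{t-2}$ in the shifted recurrence \eqref{eq:recurence_shifted_gegen}; both rest on the same three-term recurrence evaluated at $m(0)$, so the arguments are equivalent.
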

\begin{proof}
    We now show, one by one, each terms of the recurrence. We begin by $\Delta^P_t$. Indeed,
    \[
        \tilde G_t(m(\lambda)) = \sigma_0\tilde G_{t-1}(m(\lambda)) + \sigma_1 m(\lambda)\tilde G_{t-1}(m(\lambda))-\gamma_{t-1}\tilde G_{t-2}(m(\lambda)).
    \]
    Therefore, using \eqref{eq:def_delta}, we obtain
    \[
       G_t(m(\lambda)) = \sigma_0\delta_t\tilde G_{t-1}(m(\lambda)) + \sigma_1 \delta_t m(\lambda)\tilde G_{t-1}(m(\lambda))-\gamma_{t-1} \Delta^P_t \tilde G_{t-2}(m(\lambda)).
    \]
    After comparing this expression with \eqref{eq:recurence_shifted_gegen}, we deduce that
    \[
        -\gamma_{t-1} \Delta^P_t = (1-\sigma_0\delta_t).
    \]
    In other words,
    \[
        \Delta^P_t   = \frac{\sigma_0\delta_t - 1}{\gamma_{t-1}} .
    \]
    To show the other recurrences ,we will often use the fact that
    \begin{equation} \label{eq:sobolev_zero}
        \tilde S_t(m(0)) = d_{t-2}\tilde S_{t-2}(m(0)) + \tilde G_t(m(0)) - \xi_{t-2}\tilde  G_{t-2}(m(0)).
    \end{equation}

    We now show how to form $\tau_t$. Indeed, using \eqref{eq:sobolev_zero},
    \begin{align*}
        \tau_t^{-1} & = \frac{\tilde S_t(m(0))}{\tilde G_{t-2}(m(0))}\\
        & = \frac{d_{t-2}\tilde S_{t-2}(m(0)) + \tilde G_t(m(0)) - \xi_{t-2} \tilde G_{t-2}(m(0))}{\tilde G_{t-2}(m(0))}\\
        & = \frac{d_{t-2}}{\kappa_{t-2}} + \frac{1}{\Delta_t^P} - \xi_{t-2}.
    \end{align*}

    Using the same technique, we have for $\kappa_t$:
    \begin{align*}
        \kappa_t^{-1} & = \frac{\tilde S_t(m(0))}{\tilde G_t(m(0))}\\
        & = \frac{d_{t-2}\tilde S_{t-2}(m(0)) + \tilde G_t(m(0)) - \xi_{t-2}\tilde G_{t-2}(m(0))}{\tilde G_t(m(0))}\\
        & = d_{t-2}\frac{\tilde S_{t-2}(m(0))}{\tilde G_t(m(0))} + 1 - \xi_{t-2}\Delta^P_t
    \end{align*}
    However,
    \[
        \frac{\tilde S_{t-2}(m(0))}{\tilde G_t(m(0))} =
        \frac{\tilde S_{t-2}(m(0))}{\tilde G_{t-2}(m(0))} \frac{\tilde G_{t-2}(m(0))}{\tilde G_t(m(0))} = \frac{\Delta^P_t}{\kappa_{t-2}}.
    \]
    Therefore,
    \[
        \kappa_t^{-1} = d_{t-2}\frac{\Delta^P_t}{\kappa_{t-2}} + 1 - \xi_{t-2}\Delta^P_t =1+ \left(\frac{d_{t-2}}{\kappa_{t-2}}- \xi_{t-2}\right) \Delta^P_t
    \]

    Finally, it remains to show the recurrence for $\Delta^S_t$. As usual,
    \begin{align*}
        (\Delta^S_t)^{-1} & = \frac{\tilde S_{t}(m(0))}{\tilde S_{t-2}(m(0))}\\
        & =  \frac{d_{t-2}\tilde S_{t-2}(m(0)) + \tilde G_t(m(0)) - \xi_{t-2}\tilde G_{t-2}(m(0))}{\tilde S_{t-2}(m(0))} \\
        & = d_{t-2} + \frac{\tilde G_t(m(0))}{S_{t-2}(m(0))} - \xi_{t-2}\kappa_{t-2} \\
    \end{align*}
    We have seen before that
    \[
        \frac{\tilde S_{t-2}(m(0))}{\tilde G_t(m(0))} = \frac{\Delta^P_t}{\kappa_{t-2}},
    \]
    which finally gives
    \[
        (\Delta^S_t)^{-1} = d_{t-2} + \frac{\kappa_{t-2}}{\Delta^P_t} - \xi_{t-2}\kappa_{t-2} =  d_{t-2} + \left(\frac{1}{\Delta^P_t} - \xi_{t-2}\right)\kappa_{t-2}.
    \]
\end{proof}

\subsection{Norm of Sobolev Polynomials}

Now that we can build the shifted, normalized Gegenbaueur and Sobolev polynomials, we still need to compute the norm of the Sobolev polynomial to compute $P^\star_t$.

First, for simplicity, we write
\begin{align*}
    \|G_t\|^2 & = \int_{\ell}^L G_t^2(\lambda) \dif \mu(\lambda)\\
    \|\tilde G_t\|^2 & = \int_{-1}^1 \tilde G_t^2(x) \dif \tilde \mu(x)\\
    \| S_t\|^2_\eta & = \int_{\ell}^L S_t^2(\lambda) + \eta [S'_t(\lambda)]^2 \dif \mu(\lambda) \\
    \| \tilde S_t\|^2_{\tilde \eta} & =  \int_{-1}^1 \tilde S_t^2(x) + \tilde \eta [\tilde S'_t(x)]^2 \dif \tilde \mu(x), \quad \tilde \eta = \sigma_1^2\eta
\end{align*}

Indeed, to obtain the optimal method, we need to compute the coefficients
\[
    a_t = \frac{1}{\| S_t\|^2_\eta}.
\]
To do so, we will use the property \eqref{eq:property_dt}:
\[
    d_t = \xi_t\frac{\|\tilde G_t\|^2}{\|\tilde S_t\|^2_{\tilde \eta}}.
\]

We begin by the explicit expression of the norm of the shifted, normalized Sobolev polynomials, and express it as a function of the norm of the plain, monic Sobolev polynomial. Indeed,
\[
    \|S_t(\lambda)\|^2_\eta = \int_\ell^L \frac{\tilde S_t^2(m(\lambda))}{\tilde S^2_t(m(0))} + \eta \frac{ [m'(\lambda)\tilde S'_t(m(\lambda))]^2}{\tilde S^2_t(m(0))} \dif \mu(\lambda)
\]
Since $m'(\lambda) =\sigma_1$, and since $\tilde \eta = \sigma_1 \eta$, we have
\begin{align}
    \|S_t(\lambda)\|^2_\eta & = \frac{1}{\tilde S_t^2(m(0))} \int_\ell^L \tilde S_t^2(m(\lambda))+ \tilde \eta [\tilde S'_t(m(\lambda))]^2\dif \mu(\lambda)\nonumber\\
    & = \frac{1}{\tilde S_t^2(m(0))} \int_\ell^L \tilde S_t^2(m(\lambda))+ \tilde \eta [\tilde S'_t(m(\lambda))]^2\dif \tilde \mu(m(\lambda))\nonumber\\
    & = \frac{1}{\tilde S_t^2(m(0))} \int_{-1}^1 \left( \tilde S_t^2(x)+ \tilde \eta [\tilde S'_t(x)]^2\dif \right)\frac{\tilde \mu(x)}{m'(x)}\nonumber\\
    & = \frac{\sigma_1}{\tilde S_t^2(m(0))} \int_{-1}^1 \left( \tilde S_t^2(x)+ \tilde \eta [\tilde S'_t(x)]^2\dif \right)\tilde \mu(x)\nonumber\\
    & = \frac{\sigma_1}{\tilde S_t^2(m(0))} \|\tilde S_t\|^2_{\tilde \eta} \label{eq:relation_norm_sobolev}
\end{align}
Note that, by definition of $\Delta_t^S$, we have the recursion
\begin{equation}\label{eq:relation_delta_s}
    \tilde S^2_t(m(0)) = \frac{\tilde S^2_{t-2}(m(0))}{[\Delta_t^S]^2}.
\end{equation}

Let $\bar{G}_t$ be defined as
\[
    \bar{Q}_{t} = \frac{1}{t}\left[ 2x(t+\alpha-1)\bar{Q}_{t-1} - (t+2\alpha-2)\bar{Q}_{t-2} \right],
\]
i.e., $\bar{G}_t$ is a scaled version of $G_t$, which is the classical definition of Gegenbaueur polynomials. Then
\[
    \|\bar{G}_t\|^2 = \frac{\pi2^{(1-2\alpha)}}{[\Gamma(\alpha)]^2}\frac{\Gamma(t+2\alpha)}{t!(t+\alpha)}.
\]
Since $\Gamma(x+1) = x\Gamma(x)$, we can deduce a recurrence equation. Indeed,
\begin{align}
    \|\bar{G}_t\|^2 & = \frac{\pi2^{(1-2\alpha)}}{[\Gamma(\alpha)]^2} \frac{\Gamma(t+2\alpha)}{t!(t+\alpha)}\nonumber\\
    & = \frac{\pi2^{(1-2\alpha)}}{[\Gamma(\alpha)]^2} \frac{(t-1+2\alpha)\Gamma(t-1+2\alpha)}{t(t-1)!(t+\alpha)}\nonumber\\
    & = \frac{\pi2^{(1-2\alpha)}}{[\Gamma(\alpha)]^2} \frac{(t-1+2\alpha)}{t} \frac{t-1+\alpha}{t+\alpha} \frac{\Gamma(t-1+2\alpha)}{(t-1)!(t-1+\alpha)}\nonumber\\
    & =  \frac{(t-1+2\alpha)(t-1+\alpha)}{t(t+\alpha)} \|\bar{G}_{t-1}\|^2 \label{eq:recursion_gt}.
\end{align}
with the initial condition
\[
    \|\bar{G}_{0}\|^2 =  \frac{\pi2^{(1-2\alpha)}}{[\Gamma(\alpha)]^2}\frac{\Gamma(2\alpha)}{0!\alpha} = \frac{\pi2^{(1-2\alpha)}\Gamma(2\alpha)}{\alpha[\Gamma(\alpha)]^2}.
\]
However, there is a factor between $\bar G_t$ and the monic polynomial $\tilde G_t$. Indeed,
\begin{equation} \label{eq:relation_barg_tildeg}
    \tilde G_t = \frac{\bar{G}_t}{\prod_{i=0}^t\frac{2(i+\alpha-1)}{i}}.
\end{equation}
This factor can be computed recursively. Let $k_t = \frac{1}{{\prod_{i=0}^t\frac{2(i+\alpha-1)}{i}}}$. Then,
\begin{align}
    k_t & = \prod_{i=0}^t\frac{i}{2(i+\alpha-1)} \nonumber\\
    & = \frac{t}{2(t+\alpha-1)} \prod_{i=0}^{t-1}\frac{i}{2(i+\alpha-1)}\nonumber\\
    & = \frac{t}{2(t+\alpha-1)} k_{t-1}. \label{eq:recursion_kt}
\end{align}
Therefore, using successively \eqref{eq:recursion_gt}, \eqref{eq:recursion_kt}, then \eqref{eq:relation_barg_tildeg}, we have
\begin{align}
    \|\tilde G_t\|^2 & =  k_t^2\|\bar{G}_t\|^2\nonumber\\
    & = \frac{t^2}{4(t+\alpha-1)^2} k_{t-1}^2\|\bar{G}_t\|^2\nonumber\\
    & = \frac{t^2}{4(t+\alpha-1)^2} \frac{(t-1+2\alpha)(t-1+\alpha)}{t(t+\alpha)}k_{t-1}^2\|\bar{G}_{t-1}\|^2\nonumber\\
    & = \frac{t}{4(t+\alpha-1)} \frac{(t-1+2\alpha)}{(t+\alpha)}k_{t-1}^2\|\bar{G}_{t-1}\|^2\nonumber\\
    & = \underbrace{\frac{t(t-1+2\alpha)}{4(t+\alpha-1)(t+\alpha)}}_{=K_t} \|\tilde {G}_{t-1}\|^2, \label{eq:recursion_norm_gt}
\end{align}
with the same initial condition
\[
    \|\bar{G}_{0}\|^2 =\|\tilde{G}_{0}\|^2 =  \frac{\pi2^{(1-2\alpha)}\Gamma(2\alpha)}{\alpha[\Gamma(\alpha)]^2}.
\]

We now compute the recursion for $\|S_t\|_\eta^2.$ Indeed, by using successively \eqref{eq:relation_norm_sobolev}, \eqref{eq:relation_delta_s}, \eqref{eq:property_dt}, \eqref{eq:recursion_norm_gt}$\times 2$, \eqref{eq:property_dt} then \eqref{eq:relation_norm_sobolev},
\begin{align*}
    \|S_t\|_\eta^2 & = \frac{\sigma_1}{\tilde S_t^2(m(0))} \|\tilde S_t\|^2_{\tilde \eta} \\
    & = \frac{\sigma_1[\Delta_t^S]^2}{\tilde S^2_{t-2}(m(0))} \|\tilde S_t\|^2_{\tilde \eta} \\
    & = [\Delta_t^S]^2\frac{\sigma_1}{\tilde S^2_{t-2}(m(0))} \frac{\xi_t\|\tilde G_t\|^2}{d_t} \\
    & = [\Delta_t^S]^2\frac{\sigma_1}{\tilde S^2_{t-2}(m(0))} K_t K_{t-1} \frac{\xi_t\|\tilde {G}_{t-2}\|^2}{d_t}   \\
    & = [\Delta_t^S]^2\frac{\sigma_1}{\tilde S^2_{t-2}(m(0))} K_t K_{t-1} \frac{\xi_t d_{t-2}}{d_t\xi_{t-2}}\frac{\xi_{t-2}\|\tilde {G}_{t-2}\|^2}{d_{t-2}}   \\
    & = [\Delta_t^S]^2\frac{\sigma_1}{\tilde S^2_{t-2}(m(0))} K_t K_{t-1} \frac{\xi_t d_{t-2}}{d_t\xi_{t-2}}\|\tilde S_{t-2}\|_{\tilde \eta}^2  \\
    & = [\Delta_t^S]^2 K_t K_{t-1} \frac{\xi_t d_{t-2}}{d_t\xi_{t-2}}\|S_{t-2}\|^2_{\eta}.
\end{align*}

We finally have the desired recurrence for the $a_t$'s since
\[
    a_i = \frac{\bar{a}}{\|S_t\|^2_{\eta}},
\]
where $\bar{a}$ is a nonzero multiplicative constant. We can arbitrarily decide that $\bar{a}=1$, which gives us $a_0 = 1$. Given that $S_1 = G_1$, and after using \eqref{eq:recursion_norm_gt}, \eqref{eq:property_dt} and \eqref{eq:relation_norm_sobolev}, we have
\[
    a_1 = \frac{d_1}{\xi_1K_1}\left( \frac{L+\ell}{L-\ell} \right)^2.
\]

\section{Asymptotic algorithm}

\subsection{Asymptotics of Sobolev-Gegenbaeur polynomials}
From \citep{scieur2020universal}, we know that the parameters converges asymptotically to
\[
    h_t \rightarrow h= \left(\frac{2}{\sqrt{L}+\sqrt{\ell}}\right)^2, \quad m_t \rightarrow  m = \left(\frac{\sqrt{L}-\sqrt{\ell}}{\sqrt{L}+\sqrt{\ell}}\right)^2, \quad \delta_t^P \rightarrow 2\sqrt{m}, \quad  \delta_t^P \rightarrow 4m.
\]
In addition, it is easy to see that
\[
    \xi_\infty = \frac{1}{4},\qquad \gamma_\infty = \frac{1}{4}.
\]
Therefore,
\[
    d_\infty = \lim\limits_{t\rightarrow \infty} \frac{\xi_t\gamma_t\gamma_{t-1}}{\gamma_{t-1}(\eta t^2+\gamma_{t})+\xi_{t-2}(\xi_{t-2}-d_{t-2})} = \frac{ \frac{1}{16} }{\eta t^2+\frac{1}{2}-d_{\infty}} = O(1/t^2) \rightarrow 0.
\]
Thus, the recurrence simplifies into (after replacing $d_{\infty}$ by $0$)
\begin{align}
    \kappa_\infty & = \frac{1}{1-\xi_{\infty}\Delta^P_\infty} = \frac{1}{1-m},\\
    \tau_\infty   & = \frac{1}{\frac{1}{\Delta^P_\infty} - \xi_{\infty}} = \frac{4m}{1-m},\\
    \Delta^S_\infty & = \frac{1}{\left( \frac{1}{\Delta^P_\infty} - \xi_{\infty} \right)\kappa_{\infty}} = 4m
\end{align}
This means that the asymptotic recurrence for $S$ reads
\[
    S_{t}= d_{t-2} \Delta^S_t S_{t-2} + \kappa_t G_t - \tau_t \xi_{t-2} G_{t-2}  \rightarrow \frac{ G_t - m G_{t-2}}{1-m}.
\]
Moreover, we have
\begin{align*}
    a_t & = \frac{d_i \xi_{i-2}}{\xi_i d_{i-2} K_{i}K_{i-1} \Delta_i^2}  a_{t-2},\\
    K_t & = \frac{t(t-1+2\alpha)}{4(t+\alpha-1)(t+\alpha)},\\
    a_0 & = 1\\
    a_1 & = \frac{d_1\sigma_0^2}{\xi_1K_1}
\end{align*}
When $t\rightarrow \infty$, we have that $K_t \rightarrow 1/4$, $\xi_t \rightarrow 1/4$, $\Delta_t \rightarrow 4m$. Therefore,
\[
    \lim\limits_{t\rightarrow\infty}\frac{a_{t}}{a_{t-2},} = \lim\limits_{t\rightarrow\infty}\frac{d_t}{d_{t-2} m^2}  \\
\]
Moreover, $\frac{d_i}{d_{i-2}}\rightarrow 1$. So, we have in the end that
\[
    \lim\limits_{t\rightarrow\infty}\frac{a_{t}}{a_{t-2},}  = \frac{1}{m^2},\\
\]
or more simply,
\[
    \lim\limits_{t\rightarrow\infty}\frac{a_{t}}{a_{t-1},}  = \frac{1}{m}.\\
\]
Therefore, when $t\rightarrow \infty$, we have
\begin{align}
    \lim\limits_{t\rightarrow\infty} \frac{A_t}{a_t} & =  \lim\limits_{t\rightarrow\infty} \sum_0^t m^t = \frac{1}{1-m}.
\end{align}
This means that the asymptotic dynamic for $P^{\star}$ reads
\[
    P^\star_t = \frac{A_{t-1}}{A_t}P_{t-1} + \frac{a_t}{A_{t}}S_{t} \rightarrow mP_{t-1} + (1-m)S_{t}.
\]

\section{Asymptotic algorithm and asymptotic rate}
The asymptotic recurrence of the polynomials reads
\begin{align*}
     G_t   & = (1+m)G_{t-1} + h\nabla xG_{t-1} -mG_{t-2},\\
     S_{t} & = \frac{ G_t - m G_{t-2}}{1-m},\\
     P^{\star}_{t} & = mP^{\star}_{t-1} + (1-m)S_{t}.
\end{align*}
This can be simplified into
\begin{align*}
     G_t & = (1+m)G_{t-1} + h\nabla xG_{t-1} -mG_{t-2},\\
     P^{\star}_{t} & =  G_t + m (P^{\star}_{t-1} - G_{t-2}).
\end{align*}
Translated into an algorithm, we finally have a weighted average of HB iterates:
\begin{align}
    y_t & = y_{t-1} + h\nabla f(y_{t-1}) + m(y_{t-1}-y_{t-2}),\\
    x_{t} & = y_{t} + m(x_{t-1}-y_{t-2}).
\end{align}
Note that the asymptotic rate reads
\[
    \lim_{t\rightarrow\infty}\frac{\|P^{\star}_t\|}{\|P^{\star}_{t-1}\|} = \frac{A_{t-1}}{A_t} = m.
\]
Therefore, when $t\rightarrow\infty$,
\[
    \|\partial x_t(\theta) - \partial x^\star(\theta)\|_F^2 \leq O( m^t\|\partial x_0(\theta) - \partial x^\star(\theta)\|_F^2).
\]

\end{document}